\newtheorem{theorem}{Theorem}[section]
\newtheorem{lemma}[theorem]{Lemma}
\newtheorem{proposition}[theorem]{Proposition}
\newtheorem{corollary}[theorem]{Corollary}
\theoremstyle{definition}
\newtheorem{definition}[theorem]{Definition}
\newtheorem{remark}[theorem]{Remark}
\theoremstyle{remark}
\newcommand\style{\mathcal }          
\newcommand\hil{\style H}                        
\newcommand\hilk{\style K}                       
\newcommand\bh{\style B(\style H)}    
\newcommand\bof{\style B}
\newcommand\mn{{\style M}_n}
\newcommand\mpee{{\style M}_p}
\newcommand\mpeen{{\style M}_{pn}}
\newcommand\mtwo{{\style M}_2}
\newcommand\tn{{\style T}_n}            
\newcommand\tthree{{\style T}_3}
\newcommand\fn{{\mathbb F}_n}         
\newcommand\fnl{{\mathbb F}_{n-1}}
\newcommand\fni{{\mathbb F}_{\infty}}
\newcommand\ftwo{{\mathbb F}_2} 
\newcommand\osr{{\style R}}
\newcommand\oss{{\style S}}
\newcommand\ost{{\style T}}
\newcommand\kj{{\style J}}                    
\newcommand\omin{\otimes_{\rm min}}
\newcommand\omax{\otimes_{\rm max}}
\newcommand\oc{\otimes_{\rm c}}
\newcommand\coisubset{\subset_{\rm coi}}  
\newcommand\pstar{{\rm (}\mathfrak W{\rm )}}     
\newcommand\psn{{\rm (}\mathfrak S_n{\rm )}} 
\newcommand\psthree{{\rm (}\mathfrak S_3{\rm )}} 
\newcommand\csta{{\style A}}
\newcommand\cstb{{\style B}}
\newcommand\cstar{{\rm C}^*}                              
\begin{document}
 
\title[C$^*$-Algebras with WEP]{C$^*$-Algebras with the Weak Expectation Property 
and a Multivariable Analogue of Ando's Theorem on the Numerical Radius}

\author[D.~Farenick]{Douglas Farenick}
\address{Department of Mathematics and Statistics, University of Regina,
Regina, Saskatchewan S4S 0A2, Canada}
\email{douglas.farenick@uregina.ca}

\author[A.~S.~Kavruk]{Ali S.~Kavruk}
\address{Department of Mathematics, University of Houston,
Houston, Texas 77204-3476, U.S.A.}
\email{kavruk@math.uh.edu}

\author[V.~I.~Paulsen]{Vern I.~Paulsen}
\address{Department of Mathematics, University of Houston,
Houston, Texas 77204-3476, U.S.A.}
\email{vern@math.uh.edu}

\subjclass[2010]{Primary 46L06, 47A12; Secondary 46L05, 47L25}

\begin{abstract}
A classic theorem of T.~Ando characterises operators that have numerical radius at most one
as operators that admit a certain positive $2\times 2$ operator
matrix completion. In this paper we consider variants of Ando's theorem
in which the operators (and matrix completions) are constrained to a given C$^*$-algebra. By considering $n\times n$ matrix completions,
an extension of Ando's theorem to a multivariable setting is made. We show that the C$^*$-algebras in which these extended formulations of
Ando's theorem hold true are precisely the C$^*$-algebras with the weak expectation property (WEP). We also show that a C$^*$-subalgebra of $\bh$
has WEP if and only if whenever a certain $3\times 3$ (operator) matrix completion problem
can be solved in matrices over $\bh$, it can also be solved in matrices over $\csta$. This last result gives a 
characterisation of WEP that is spatial and yet is independent of the particular representation of the C$^*$-algebra. This leads to a new characterisation
of injective von Neumann algebras. We also give a new equivalent formulation of the Connes Embedding Problem as a problem concerning
$3\times3$ matrix completions.
\end{abstract}

\maketitle

\section{INTRODUCTION}

The \emph{numerical radius} of a bounded linear operator $X$ acting on a Hilbert space $\hil$  is the quantity
$w(X)$ defined by
\[ w(X) \,=\, \sup \{ |\langle X\xi,\xi \rangle| \, :\, \xi \in \hil,\, \|\xi\| =1 \}. 
\]
A classic theorem of Ando \cite{ando1973}
gives a matricial positivity characterisation of the numerical radius of an operator:
$w(X) \le 1/2$ if and only if there exist positive operators $A,B \in \bh$ such that $A+B=I\in\bh$ and
$\left[ \begin{array}{cc} A& X \\ X^* & B \end{array} \right] $
is a positive operator on $\hil\oplus\hil$.
With a little rescaling, one can see that Ando's theorem is equivalent
to the statement that $w(X) < 1/2$ if and only if there exist positive
invertible operators $A,B \in \bh$ with $A+B=I$ such that the $2\times 2$ operator matrix above
is positive and invertible.

It is natural to wonder if this theorem remains true when $\bh$ is replaced by an arbitrary unital C*-algebra. The answer is yes,
as long as one requires that the inequality be strict. Thus, we have the following minor improvement of Ando's theorem,
where, in its formulation below, $\csta_+$ and $\csta_+^{-1}$ denote the set of positive elements and the group of positive invertible elements
of a unital C$^*$-algebra $\csta$ respectively, and where the numerical radius $w(x)$ of $x\in\csta$
is the maximum value of $|s(x)|$ as $s$ ranges through all states of $\csta$.

\begin{theorem}\label{ando thm} Assume that $\csta$ is any unital C$^*$-algebra. The following statements
are equivalent for $x\in\csta$:
\begin{enumerate}
\item $w(x) < \frac{1}{2}$; 
\item for every unitary $v\in\cstb$ in every unital C$^*$-algebra $\cstb$, the element $x\otimes v\in \csta\omin\cstb$ satisfies $w(x\otimes v) < \frac{1}{2}$;
\item for every unital C$^*$-algebra $\cstb$,
\[
1_\csta\otimes1_\cstb+x\otimes v + x^*\otimes v^*\in\left(\csta\omin\cstb\right)_+^{-1}
\]
for every unitary $v\in\cstb$;
\item $1_\csta\otimes1_\cstb+x\otimes u + x^*\otimes u^*\in\left(\csta\omin\cstb\right)_+^{-1}$, where $\cstb$ is the universal C$^*$-algebra generated by a 
(universal) unitary $u$;
\item $1+zx+\overline{z}x^*\in\csta_+^{-1}$ for every $z\in\mathbb T$;
\item there exist $a,b\in\csta_+^{-1}$ such that
\[
\left[ \begin{array}{cc} a & x \\ x^* & b\end{array}\right] \in \mtwo(\csta)_+\,
\] 
with $a+b= 1$.
\end{enumerate}
\end{theorem}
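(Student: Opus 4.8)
The plan is to prove a single cycle $(1)\Rightarrow(5)\Rightarrow(4)\Rightarrow(3)\Rightarrow(2)\Rightarrow(1)$ and then to attach $(6)$ to $(5)$. The anchor is $(1)\Leftrightarrow(5)$, which I would read off the state space of $\csta$: for a state $s$ and $z\in\mathbb{T}$ one has $s(1+zx+\bar z x^*)=1+2\,\mathrm{Re}\big(z\,s(x)\big)$, whose minimum over $z$ is $1-2|s(x)|$. Since the state space is weak-$*$ compact the supremum defining $w(x)$ is attained, so $w(x)<\tfrac12$ is equivalent to a uniform estimate $1-2|s(x)|\ge 2\delta>0$ over all states; this in turn is exactly the assertion that $1+zx+\bar z x^*\ge 2\delta\,1_\csta$, i.e. $1+zx+\bar z x^*\in\csta_+^{-1}$, for every $z\in\mathbb{T}$. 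The argument is valid in any unital C$^*$-algebra, so I may apply it verbatim later inside a tensor product.

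For $(5)\Leftrightarrow(4)$ I would use that the universal C$^*$-algebra generated by a unitary is $C(\mathbb{T})$, whence $\csta\omin C(\mathbb{T})\cong C(\mathbb{T},\csta)$ and the element $1_\csta\otimes 1+x\otimes u+x^*\otimes u^*$ is identified with the function $z\mapsto 1+zx+\bar z x^*$; positive invertibility in $C(\mathbb{T},\csta)$ is uniform pointwise positive invertibility, which is precisely $(5)$. Then $(4)\Rightarrow(3)$ holds because each unitary $v\in\cstb$ induces a unital $*$-homomorphism $C(\mathbb{T})\to\cstb$ and hence, by functoriality of $\omin$, a unital $*$-homomorphism $\csta\omin C(\mathbb{T})\to\csta\omin\cstb$; a unital $*$-homomorphism preserves lower bounds of the form $\varepsilon\,1_\csta$, so positive-invertible elements have positive-invertible images. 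Next $(3)\Rightarrow(2)$: applying the already-established $(1)\Leftrightarrow(5)$ inside $\csta\omin\cstb$ to the element $x\otimes v$ shows $w(x\otimes v)<\tfrac12$ iff $1_\csta\otimes1+x\otimes(zv)+x^*\otimes(zv)^*\in(\csta\omin\cstb)_+^{-1}$ for all $z$, and each $zv$ is again a unitary, so this is immediate from $(3)$. Finally $(2)\Rightarrow(1)$ is the trivial case $\cstb=\mathbb{C}$, $v=1$, closing the cycle through $(1)$--$(5)$.

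It remains to tie $(6)$ to the cycle, say to $(5)$; here I read the completion in $(6)$ as positive \emph{and invertible}, matching the strict form of Ando's theorem recalled in the Introduction. For $(6)\Rightarrow(5)$: conjugating the completion by the unitary $\mathrm{diag}(1,z)\in\mtwo(\csta)$ leaves it $\ge\eta\,1$ and replaces $x$ by $zx$, and compressing the result by the column $(1,1)^{\mathrm T}$ gives, using $a+b=1$, the inequality $1+zx+\bar z x^*\ge 2\eta\,1_\csta$ for every $z$, which is $(5)$.

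The substantial direction is $(5)\Rightarrow(6)$, which I expect to be the main obstacle. By the standard $2\times2$ positivity criterion, building the completion is equivalent to finding $a\in\csta$ with $0<a<1$ and $x^*a^{-1}x\le 1-a$; writing $a=c_0^*c_0$ and $b=c_1^*c_1$, this is the same as factoring the degree-one positive trigonometric polynomial as $1+zx+\bar z x^*=(c_0^*+\bar z c_1^*)(c_0+zc_1)$ with $c_0,c_1\in\csta$, $c_0^*c_0+c_1^*c_1=1$ and $c_0^*c_1=x$. This is an operator Fej\'er--Riesz factorization, and the entire difficulty is that the factors must be produced \emph{inside} $\csta$ rather than merely inside an ambient $\bh\supseteq\csta$; this is exactly what always succeeds in the present degree-one ($2\times2$) situation but fails in general in the multivariable ($n\times n$) setting, which is why the latter detects WEP. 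Concretely I would obtain $a$ as the distinguished positive solution of the Riccati equation $a+x^*a^{-1}x=1$: since $w(x)<\tfrac12$ forces $\|x\|<1$, the map $F(a)=1-x^*a^{-1}x$ sends $1_\csta$ to $1-x^*x>0$, is operator-monotone increasing, and generates a decreasing iterate $a_{n+1}=F(a_n)$ lying in $\csta$. The remaining technical points, which I would address last, are to show that strictness of $w(x)<\tfrac12$ makes this iteration a norm contraction near its fixed point, so that $a_\infty\in\csta$ with $0<a_\infty<1$, and that the resulting completion is positive and invertible.
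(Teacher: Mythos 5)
Your cycle $(1)\Rightarrow(5)\Rightarrow(4)\Rightarrow(3)\Rightarrow(2)\Rightarrow(1)$ and the implication $(6)\Rightarrow(5)$ are correct, and they are essentially the ``easily verified'' equivalences the paper alludes to after Theorem \ref{interesting}; your reading of $(6)$ as requiring the $2\times2$ matrix to be positive \emph{and invertible} is also the right one (with mere positivity, $x=\tfrac12$ in $\csta=\mathbb{C}$ with $a=b=\tfrac12$ satisfies $(6)$ but not $(1)$). For the one substantial implication $(5)\Rightarrow(6)$ you depart from the paper, which derives the completion from the nuclearity of $C(\mathbb T)$ via the complete quotient map $\mathcal{T}_2\to\oss_1$ and the lifting of strictly positive elements (Theorems \ref{lifting to matrices} and \ref{alg psn}); your Riccati/shorted-operator route is instead the one sketched in Remark \ref{Bunce} (Bunce's proof).

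The gap sits exactly at the step you defer. The iteration $a_{k+1}=F(a_k)=1-x^*a_k^{-1}x$ is indeed decreasing and bounded below by $\delta=1-2w(x)>0$, but that gives only strong convergence in $\csta''$; to land in $\csta$ you need norm convergence, and your proposed mechanism --- that $F$ is a norm contraction near its fixed point --- is neither proved nor supported by the natural estimates. From $F(a)-F(b)=x^*b^{-1}(a-b)a^{-1}x$ the Lipschitz constant on $\{a\ge\delta\}$ is $\|x\|^2/\delta^2$, and since $w(x)<\tfrac12$ only forces $\|x\|<1$ while $\delta$ can be arbitrarily small, this is typically far greater than $1$ (e.g.\ $w(x)=0.4$, $\|x\|=0.8$ gives $16$); the sharper bound $\|F'(a_*)[h]\|\le\|a_*^{-1}\|\,\|x^*a_*^{-1}x\|\,\|h\|\le\delta^{-1}(1-\delta)\|h\|$ is below $1$ only when $w(x)<\tfrac14$. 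The clean repair --- and this is precisely Bunce's argument --- avoids the limit altogether: the infinite tridiagonal matrix $Q=1\otimes1+x\otimes S+x^*\otimes S^*$ lives in $\csta\omin\cstar(S)$ (the Toeplitz algebra), satisfies $Q\ge\delta$ there because $S$ compresses the bilateral shift and $(5)$ gives $1+zx+\bar z x^*\ge\delta$ uniformly, hence is invertible \emph{inside that C$^*$-algebra}; its $(1,1)$ entry then lies in $\csta$ and produces $a=1-x^*(Q^{-1})_{11}x\in\csta$ in one step. You would still need the rescaling $x\mapsto x/\lambda$ with $\lambda\uparrow1$ to upgrade the resulting positive completion to a strictly positive one with $b=1-a$ invertible (the exact Riccati solution can have $1-a$ singular, as a $2\times2$ nilpotent $x$ already shows), but that is routine since $w(x)<\tfrac12$ is an open condition.
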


Surprisingly, this slight extension of Ando's Theorem is logically equivalent to the assertion that $C(\mathbb T)$ is a nuclear C$^*$-algebra
(Theorem \ref{interesting}(1)).

It is also natural to wonder if it is possible to formulate Ando's
Theorem for a greater number of variables. Since the numerical radius
of an operator remains unchanged when one tensors with a unitary, we
have been led to the following definition.

\begin{definition} The \emph{free joint numerical radius} $w(X_1, \ldots,X_n)$ of $n$ operators $X_1,\dots,X_n\in \bh$
is
\[w(X_1, \ldots,X_n) = \sup \{ w(X_1\otimes U_1 +\dots + X_n\otimes
U_n) \}\,,\]
where the supremum is taken over every Hilbert space $\hilk$, every
choice of $n$ unitaries $U_1,\dots, U_n \in \bof(\hilk)$, and the tensor
product is spatial.
\end{definition}

We obtain a characterisation of $n$ tuples of operators with
$w(X_1,\ldots,X_n) < 1/2$ in terms of matrix positivity (Theorem~\ref{multando}),
which is a natural extension of Ando's Theorem.
What is perhaps most surprising is that when one asks whether the entries of this operator matrix can be chosen from a given C*-algebra 
$\csta$ containing $X_1, \dots, X_n$,
we find that the extension of Ando's result to two or more variables holds if and only if the C$^*$-algebra $\csta$ has the weak expectation property (WEP).

In this manner, we obtain a characterisation of the WEP property for a 
C$^*$-subalgebra of $\bh$ that is in the spirit of completion problems. 
That is, we prove in Theorem \ref{wep3x3-1} that a C$^*$-subalgebra of 
$\bh$ has WEP if and only if whenever a certain $3 \times 3$ operator 
completion problem can be solved in $\bh$,
then it can be solved with entries from the C$^*$-algebra.

This characterisation of WEP is independent of the particular faithful 
representation of the C$^*$-algebra on a Hilbert space. In this regard Theorem \ref{wep3x3-1} departs
from the original definition of WEP, which requires that {\it every} faithful 
representation of the given C$^*$-algebra 
admits a weak expectation into its double commutant. This requirement in the original definition of WEP is
crucial, as every C$^*$-algebra has at 
least one faithful representation which has a weak expectation into its double commutant.

Since a von~Neumann algebra has WEP if and only if it is injective, our 
results also give a characterisation of injective von~Neumann algebras in terms of 
our $3 \times 3$ completion property (Corollary \ref{inj1}). In particular, this shows 
that the operators that solve our $3 \times 3$ completion problem for a given unital 
C$^*$-subalgebra $\csta \subset \bh$ without WEP will generally not
even be found within the double commutant of $\csta$.

Finally, by Kirchberg's results \cite{kirchberg1993}, we find in Theorem \ref{cep}
that Connes' Embedding Problem is equivalent to whether or not $\cstar(\ftwo)$ has our $3 \times 3$ completion property. 
This might represent some progress on this conjecture, since, as noted already, it is sufficient to check our property for any chosen faithful representation.

\section{OPERATOR SYSTEMS}

If $\psi:\osr\rightarrow\ost$ is a linear map of operator systems, then for each $p\in\mathbb N$ the linear map
$\psi^{(p)}:\mpee(\osr)\rightarrow\mpee(\ost)$ is defined by $\psi^{(p)}\left([x_{ij}]_{i,j}\right)=\left[\psi(x_{ij})\right]_{ij}$. The positive matricial cones
of an operator system $\osr$ are denoted by $\mpee(\osr)_+$, and the order unit of $\osr$ is denoted by $1_\osr$ or, if  there is little chance of ambiguity,
by $1$. 

Two (classes of) operator systems have a prominent role in what follows. The first, $\tn$, is an operator subsystem of
the $n\times n$ complex matrix algebra $\mn$. If $\{e_{ij}\}_{1\leq i,j\leq n}$ denotes the set of standard matrix
units of $\mn$, then
\[
\tn\,=\,\mbox{Span}\{e_{ij}\,:\,|i-j|\leq 1\} 
\]
is the operator system of tridiagonal matrices.

The second operator system of interest is denoted by $\oss_n$.
Assume that $\fni$ is the free group on countably many generators $u_1,u_2,\dots$ and let $\fn\subset\fni$
be the free group on $n$ generators $u_1,\dots,u_n$. In the group C$^*$-algebra $\cstar(\fni)$ these generators $u_j$ are
(universal) unitaries. For a fixed $n\in\mathbb N$, consider the operator subsystem $\oss_n\subset\cstar(\fn)$ defined by
\[
\oss_n\,=\,\mbox{Span}\{u_{-n}, u_{-n+1},\dots, u_{-1}, u_0, u_1, \dots, u_{n-1}, u_n\},
\]
where
$u_0=1$ and $u_{-k}=u_k^*$ for $1\leq k\leq n$.

The relationship between the two types of operator systems described above is as follows. Let $n\geq2$ and let
$\phi:\tn\rightarrow\oss_{n-1}$ be the linear map defined by
\[
\phi(e_{ii}) = \frac{1}{n}u_0,\;  \phi(e_{j,j+1}) = \frac{1}{n}u_j, 
\;\mbox{ and }\; \phi(e_{j+1,j}) = \frac{1}{n}u_{-j}
\]
for $i=1,\dots,n$ and $j=1,\dots,n-1$.
The map $\phi$ is ucp  \cite[Theorem~4.2]{farenick--paulsen2011}, but the most important fact about $\phi$ is
that it is a \emph{complete quotient map}, which we explain below. 

Recall from \cite{kavruk--paulsen--todorov--tomforde2010}
that if $\psi:\osr\rightarrow\ost$ is a surjective
completely positive linear map of operator systems with kernel $\kj$, then
there is an operator system structure on the quotient vector space $\osr/\kj$ and there exists a completely 
positive linear isomorphism $\dot{\psi}:\osr/\kj\rightarrow\ost$ such that $\psi=\dot{\psi}\circ q$, where 
$q:\osr\rightarrow\osr/\kj$ is the canonical quotient map. If the completely positive linear isomorphism $\dot{\psi}$ is 
a complete order isomorphism (that is, if $\dot{\psi}^{-1}$ is completely positive), then $\psi$ is said to be a complete quotient map.
  
Returning to the map $\phi:\tn\rightarrow\oss_{n-1}$ above, we have from \cite[Theorem~4.2]{farenick--paulsen2011} that
$\phi$ is a complete quotient map; that is, the operator systems
$\tn/\kj,$ where $\kj = \ker(\phi)$, and $\oss_{n-1}$ are completely
order isomorphic. The importance of this fact concerning $\phi$ is that strictly positive elements in the matrix space $\mpee(\oss_{n-1})$
lift back to strictly positive elements in $\mpee(\tn)$ for every $p\in\mathbb N$ (Proposition \ref{sp}). 
This fact, when applied while tensoring these operator systems
with a C$^*$-algebra, is at the heart of our matrix completion perspective.

The fundamental results for a theory of operator system tensor products are developed
in \cite{kavruk--paulsen--todorov--tomforde2011,kavruk--paulsen--todorov--tomforde2010}. An operator system tensor product $\osr\otimes_\tau\ost$
is an operator system structure on the algebraic tensor product $\osr\otimes\ost$ satisfying a set of natural axioms.
Given two operator system tensor product structures $\osr\otimes_{\tau_1}\ost$ and  $\osr\otimes_{\tau_2}\ost$ on $\osr\otimes\ost$,
we of course have equality of $\osr\otimes_{\tau_1}\ost$ and
$\osr\otimes_{\tau_2}\ost$ as sets; however, we use the
notation
$\osr\otimes_{\tau_1}\ost_1 \,=\, \osr\otimes_{\tau_2}\ost_1$
to indicate that the identity map is a complete
order isomorphism. 
Given two inclusions, $\osr_1 \subseteq \osr_2$ and $\ost_1
\subseteq \ost_2$ and operator system tensor product structures
$\osr_1\otimes_{\tau_1} \ost_1$ and $\osr_2\otimes_{\tau_2}\ost_2,$ we use the notation
 $\osr_1\otimes_{\tau_1}\ost_1 \,\coisubset\,
 \osr_2\otimes_{\tau_2}\ost_2$ to indicate that the tensor product of
 the two inclusion maps is a complete order isomorphism onto its range.

Of particular interest here are the tensor products 
$\omin$, $\oc$, and $\omax$
\cite{kavruk--paulsen--todorov--tomforde2011}. In this case, we have that
the tensor products of the identity maps are ucp as maps from
$\osr\omax\ost$ to $\osr\oc\ost$ and from  $\osr\oc\ost$ to $\osr\omin\ost$
for all operator systems $\osr$, $\ost$. Indeed, the matricial cones associated with $\osr\omax\ost$ lie within
the matricial cones of any operator system structure $\osr\otimes_\tau\ost$, while the matricial cones of
any $\osr\otimes_\tau\ost$ are contained in the corresponding matricial cones of $\osr\omin\ost$.

We identify $\mpee\left(\osr\omin\ost\right)$ with $\mpee(\osr)\omin\ost$, for any 
operator systems $\osr$ and $\ost$.

When considering operator system tensor products of unital C$^*$-algebras $\csta$ and $\cstb$, 
the identity map of the operator system $\csta\omin\cstb$ into the C$^*$-algebra tensor product $\csta\omin\cstb$ is
completely positive \cite[Corollary 4.10]{kavruk--paulsen--todorov--tomforde2011}. The analogous statement for the $\omax$ tensor product also holds
\cite[Theorem 5.12]{kavruk--paulsen--todorov--tomforde2011}.
Because of the norm--order duality in operator systems \cite[\S4]{choi--effros1977}, \cite[Proposition 13.3]{Paulsen-book}, 
we have that $\csta\omin\cstb=\csta\omax\cstb$ as
operator systems if and only if the algebraic tensor product
$\csta\otimes\cstb$ has a unique C$^*$-norm if and only if
$\csta\omin\cstb=\csta\omax\cstb$ as C$^*$-algebras. 
For this reason, the equation  $\csta\omin\cstb=\csta\omax\cstb$ can be treated unambiguously as
a statement about operator systems or as a statement about C$^*$-algebras.

The following lifting property \cite{farenick--paulsen2011} for positive matrices over operator systems is a key feature of our approach.
An operator system $\osr$ is said to have \emph{property $\psn$} 
for a fixed $n\in\mathbb N$ if, for every $p\in\mathbb N$, every $\varepsilon>0$, and every positive
\[
\sum_{i=1-n}^{n-1}S_{i}\otimes u_i \,\in\, \mpee(\osr)\omin\cstar(\fnl)\,,
\]
there exist  $R_{ij}^\varepsilon\in  \mpee(\osr)$, for $1\leq i,j \leq n$, such that
\begin{enumerate}
\item $R_\varepsilon=[R_{ij}^\varepsilon]_{1\leq i,j\leq n}$ is positive in $\mn(\mpee(\osr))$,
\item $R_{ij}^\varepsilon=0$ for all $|i-j|\geq2$, $R_{i,i+1}^\varepsilon=S_{i}$, and $R_{i+1,i}^\varepsilon=S_{-i}$ for all $i$, and
\item $\displaystyle\sum_{i=1}^n R_{ii}^\varepsilon \,=\,S_0+\varepsilon 1_{\mpee(\osr)}$.
\end{enumerate}

To conclude this section, we connect the notions described above using the operator systems $\tn$ and $\oss_{n-1}$,
the linear map $\phi$ that links them, and property $\psn$.
 
\begin{theorem}\label{fp thm} {\rm (\cite{farenick--paulsen2011})} Assume that $n\geq2$ and $\osr$ is an arbitrary operator system.
\begin{enumerate}
\item The map ${\rm id}_\osr\otimes\phi:\osr\omax\tn\rightarrow\osr\omax\oss_{n-1}$ is a complete quotient map.
\item The map ${\rm id}_\osr\otimes\phi:\osr\omin\tn\rightarrow\osr\omin\oss_{n-1}$ is a complete quotient map if and only if $\osr$ has property $\psn$.
\end{enumerate}
\end{theorem}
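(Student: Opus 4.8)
The plan is to reduce both statements to the standard characterisation of complete quotient maps from \cite{kavruk--paulsen--todorov--tomforde2010}: a surjective ucp map $\psi:\osr\rightarrow\ost$ is a complete quotient map precisely when, for every $p\in\mathbb N$, every $Y\in\mpee(\ost)_+$, and every $\varepsilon>0$, there is some $X\in\mpee(\osr)_+$ with $\psi^{(p)}(X)=Y+\varepsilon 1$. Throughout I would use that $\phi:\tn\rightarrow\oss_{n-1}$ is itself a complete quotient map, that $\omin$ is injective (complete order embeddings survive $\osr\omin(-)$), and the explicit description of the maximal matricial cones from \cite{kavruk--paulsen--todorov--tomforde2011}. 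I also record the two cone identifications that make the statements concrete: since $\tn\coisubset\mn$ and $\omin$ is injective, $\mpee(\osr\omin\tn)\coisubset\mpee(\osr)\omin\mn\cong\mn(\mpee(\osr))$, so a tridiagonal $[R_{ij}]$ is positive in $\mpee(\osr\omin\tn)$ iff it is positive as a matrix over $\mpee(\osr)$; and since $\oss_{n-1}\coisubset\cstar(\fnl)$, an element $\sum_i S_i\otimes u_i$ is positive in $\mpee(\osr\omin\oss_{n-1})$ iff it is positive in $\mpee(\osr)\omin\cstar(\fnl)$.

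For (1) I would argue projectivity of $\omax$ directly on the cones. Fix $p$ and a positive $W\in\mpee(\osr\omax\oss_{n-1})$. By the defining description of the maximal cone, after an arbitrarily small perturbation $W$ has the form $W=\alpha(P\otimes Q)\alpha^*$ with $P\in\style M_k(\osr)_+$, $Q\in\style M_\ell(\oss_{n-1})_+$, and $\alpha$ a scalar rectangular matrix (a finite sum of such terms reduces to a single one by passing to direct sums). Because $\phi$ is a complete quotient map, for each $\varepsilon>0$ there is $\widetilde Q\in\style M_\ell(\tn)_+$ with $\phi^{(\ell)}(\widetilde Q)=Q+\varepsilon 1$. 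Setting $\widetilde W=\alpha(P\otimes\widetilde Q)\alpha^*\in\mpee(\osr\omax\tn)_+$ gives a positive element with $({\rm id}_\osr\otimes\phi)^{(p)}(\widetilde W)=\alpha\bigl(P\otimes(Q+\varepsilon 1)\bigr)\alpha^*=W+\varepsilon\,\alpha(P\otimes 1)\alpha^*$, which differs from $W$ by a controllably small positive element. Tracking the perturbations through the Archimedeanisation then yields the lifting property above, so ${\rm id}_\osr\otimes\phi$ is a complete quotient map on the max side.

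For (2) the point is that $\omin$ is \emph{not} projective, so the lifting used in (1) is unavailable and must be hypothesised; property $\psn$ is exactly what is needed. Assuming $\osr$ has $\psn$, fix a positive $Y=\sum_{i=1-n}^{n-1}S_i\otimes u_i\in\mpee(\osr\omin\oss_{n-1})=\mpee(\osr)\omin\oss_{n-1}$, which by the identification above is positive in $\mpee(\osr)\omin\cstar(\fnl)$. Property $\psn$ supplies, for each $\varepsilon>0$, a positive tridiagonal completion $R^\varepsilon=[R_{ij}^\varepsilon]\in\mn(\mpee(\osr))_+$ with off-diagonal entries $S_{\pm i}$ and $\sum_i R_{ii}^\varepsilon=S_0+\varepsilon 1$; by the first cone identification $R^\varepsilon$ is positive in $\mpee(\osr\omin\tn)$. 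A direct computation using $\phi(e_{ii})=\frac1n u_0$, $\phi(e_{j,j+1})=\frac1n u_j$, and $\phi(e_{j+1,j})=\frac1n u_{-j}$ gives
\[
({\rm id}_\osr\otimes\phi)^{(p)}(R^\varepsilon)=\tfrac1n\bigl(Y+\varepsilon 1\bigr),
\]
so, discarding the harmless global factor $\tfrac1n$, the element $R^\varepsilon$ is precisely the required positive lift of $Y+\varepsilon 1$; hence ${\rm id}_\osr\otimes\phi$ is a complete quotient map. Conversely, if ${\rm id}_\osr\otimes\phi$ is a complete quotient map, then reading off the entries of the positive lift guaranteed by the lifting property recovers a family $R^\varepsilon$ satisfying the three conditions defining $\psn$, so $\osr$ has property $\psn$.

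I expect the main obstacle to be the bookkeeping in (2): one must verify the two cone identifications (which rest on injectivity of $\omin$ together with the complete order embeddings $\tn\coisubset\mn$ and $\oss_{n-1}\coisubset\cstar(\fnl)$) and then match the abstract Archimedean quotient condition with the concrete completion, keeping careful track of the $+\varepsilon 1$ on the diagonal and of the global scaling $\tfrac1n$. By comparison, part (1) is routine once the maximal cone is written explicitly, since the complete quotient property of $\phi$ transfers immediately through the tensor leg.
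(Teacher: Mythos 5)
Your argument is correct and follows essentially the route of the cited source: the paper states Theorem \ref{fp thm} without proof (citing \cite{farenick--paulsen2011}), where part (1) is exactly the projectivity of $\omax$ under the complete quotient $\phi$ (their Proposition 1.6) and part (2) is the unwinding of property $\psn$ through the cone identifications $\mpee(\osr\omin\tn)\coisubset\mn(\mpee(\osr))$ and $\mpee(\osr\omin\oss_{n-1})\coisubset\mpee(\osr)\omin\cstar(\fnl)$ that you describe. Your $\tfrac{1}{n}$-scaling and linear-independence bookkeeping in (2) is the same computation the paper itself carries out when it applies the theorem in the proof of Theorem \ref{lifting to matrices}.
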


\section{PROPERTY $\psn$ AND A MULTIVARIABLE ANALOGUE OF ANDO'S THEOREM}

In this section we use the fact that $\bh$ has property $\psn$ 
\cite[Theorem 6.2]{farenick--paulsen2011} to derive a multivariable
analogue of Ando's theorem.

\begin{definition} If $\osr$ is an operator system with order unit $1_\osr$, then $s\in\osr$ is 
\emph{strictly positive} if there is a real number $\delta>0$ such that $s\geq\delta1_\osr$.
\end{definition}

This terminology is not entirely standard.
It is not hard to see that an element $s \in \osr$ is strictly
positive if and only if for every unital C*-algebra $\csta$ and every
ucp map $\psi: \osr \to \csta$ we have that $\psi(s)$ is positive and
invertible. Thus, in our terminology $P \in \bh$ is strictly positive if
and only if $P$ is positive and invertible.

\begin{proposition}\label{sp} The following statements are equivalent for a  
ucp surjection $\psi:\osr\rightarrow\ost$ of operator systems:
\begin{enumerate}
\item\label{sp-1} for every $p\in\mathbb N$ and every strictly 
positive $y\in\mpee(\ost)$ there is a strictly positive $x\in\mpee(\osr)$ such that $\psi^{(p)}(x)=y$;
\item\label{sp-2} $\psi$ is a complete quotient map.
\end{enumerate}
\end{proposition}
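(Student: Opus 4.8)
The plan is to argue directly at the level of the matricial cones of the quotient operator system $\osr/\kj$, where $\kj=\ker\psi$. I would begin by recalling from \cite{kavruk--paulsen--todorov--tomforde2010} the quotient cone structure: writing $q:\osr\to\osr/\kj$ for the canonical surjection, the order unit of $\osr/\kj$ is $q(1_\osr)$, and the positive cone $\mpee(\osr/\kj)_+$ is the Archimedeanisation of $q^{(p)}\left(\mpee(\osr)_+\right)$. Concretely, $q^{(p)}(x)\in\mpee(\osr/\kj)_+$ if and only if for every $\varepsilon>0$ there is $P\in\mpee(\osr)_+$ with $q^{(p)}(P)=q^{(p)}(x)+\varepsilon\,q^{(p)}(1_{\mpee(\osr)})$. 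Since $\psi$ is ucp, the induced linear isomorphism $\dot\psi:\osr/\kj\to\ost$ is completely positive, so $\dot\psi^{(p)}$ carries $\mpee(\osr/\kj)_+$ into $\mpee(\ost)_+$; hence $\psi$ is a complete quotient map precisely when, for every $p$, every $y\in\mpee(\ost)_+$ has its unique $\dot\psi^{(p)}$-preimage inside $\mpee(\osr/\kj)_+$. I would prove both implications in this formulation.

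For (\ref{sp-1})$\Rightarrow$(\ref{sp-2}), I would fix $y\in\mpee(\ost)_+$ and set $X=\left(\dot\psi^{(p)}\right)^{-1}(y)$, aiming to show $X\in\mpee(\osr/\kj)_+$ via the Archimedean description. Given $\varepsilon>0$, the hypothesis applied to the strictly positive element $y+\varepsilon1$ produces a strictly positive $x_\varepsilon\in\mpee(\osr)$ with $\psi^{(p)}(x_\varepsilon)=y+\varepsilon1$. Then $q^{(p)}(x_\varepsilon)\in q^{(p)}\left(\mpee(\osr)_+\right)$, and since $\dot\psi^{(p)}\left(q^{(p)}(x_\varepsilon)\right)=y+\varepsilon1=\dot\psi^{(p)}\left(X+\varepsilon\,q^{(p)}(1_{\mpee(\osr)})\right)$ while $\dot\psi^{(p)}$ is injective, I conclude $q^{(p)}(x_\varepsilon)=X+\varepsilon\,q^{(p)}(1_{\mpee(\osr)})$. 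As $\varepsilon$ is arbitrary, this is exactly the Archimedean condition placing $X$ in the quotient cone.

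For (\ref{sp-2})$\Rightarrow$(\ref{sp-1}), I would fix $y\in\mpee(\ost)$ strictly positive, say $y\geq\delta1$, and put $y_0=y-\tfrac{\delta}{2}1\in\mpee(\ost)_+$. Because $\dot\psi$ is a complete order isomorphism, $X_0:=\left(\dot\psi^{(p)}\right)^{-1}(y_0)$ lies in $\mpee(\osr/\kj)_+$, and in fact $X_0-\tfrac{\delta}{4}q^{(p)}(1_{\mpee(\osr)})$ does too, since its $\dot\psi^{(p)}$-image is $y-\tfrac{3\delta}{4}1\geq\tfrac{\delta}{4}1\geq0$. Feeding $\varepsilon=\tfrac{\delta}{4}$ into the Archimedean description yields an exact positive lift, namely some $Q\in\mpee(\osr)_+$ with $q^{(p)}(Q)=X_0$, whence $\psi^{(p)}(Q)=y_0$. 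Setting $x=Q+\tfrac{\delta}{2}1_{\mpee(\osr)}$ then gives $x\geq\tfrac{\delta}{2}1_{\mpee(\osr)}$ strictly positive with $\psi^{(p)}(x)=y_0+\tfrac{\delta}{2}1=y$.

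The main, if modest, obstacle is the interplay between the two notions of positivity in the quotient. The Archimedeanisation only ever returns \emph{positive} (not strictly positive) exact lifts, so the strictness required in (\ref{sp-1}) does not come for free; the device that repairs this is to peel off a definite multiple $\tfrac{\delta}{2}1$ of the order unit before lifting and to add it back afterwards, exploiting the spectral gap $\delta$ afforded by strict positivity. In the other direction, securing an \emph{exact} lift (mapping onto $y$ rather than merely approximately) rests on the injectivity of $\dot\psi^{(p)}$, which upgrades the approximate positive lifts furnished by the Archimedean cone into genuine representatives. I expect these exactness-and-strictness bookkeeping steps to be the only real subtlety; the remainder is a direct translation through the quotient cone structure of \cite{kavruk--paulsen--todorov--tomforde2010}.
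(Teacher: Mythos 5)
Your proposal is correct and follows essentially the same route as the paper: for (\ref{sp-1})$\Rightarrow$(\ref{sp-2}) you lift $y+\varepsilon 1$ to produce exactly the Archimedean witness required by the quotient cone, and for (\ref{sp-2})$\Rightarrow$(\ref{sp-1}) you use the same peel-off-$\tfrac{\delta}{2}$, use-$\tfrac{\delta}{4}$-of-slack, add-back device that the paper employs. The only (cosmetic) difference is that you verify positivity of $\left(\dot\psi^{(p)}\right)^{-1}(y)$ directly for all positive $y$, whereas the paper checks it only for strictly positive $y$ and defers the Archimedean absorption to a following remark.
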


\begin{proof} The proof in the case of general $p\in\mathbb N$ is no different than the proof in the case $p=1$, and
so we settle on this case for simplicity of notation. 

\eqref{sp-1}$\Rightarrow$\eqref{sp-2}. Assume that $y\in\ost$ is strictly positive. 
Let $\dot{x}\in(\osr/\ker\psi)$ be the unique preimage of $y$ under $\dot{\psi}$; note that $\psi(x)=y$. We aim
to show that $\dot{x}$ is positive. By definition of positivity in 
the quotient \cite[\S3]{kavruk--paulsen--todorov--tomforde2010}, we are to show that
for every $\varepsilon>0$ there is a $k_\varepsilon\in\ker\psi$ 
such that $\varepsilon1_\osr+x+k_\varepsilon\in\osr_+$. Fix $\varepsilon>0$. Because $y+\varepsilon1_\ost$
is strictly positive, there is a strictly positive $x_\varepsilon\in\osr$ such that $\psi(x_\varepsilon)=y+\varepsilon1_\ost$.
Thus, if $k_\varepsilon=x_\varepsilon-(x+\varepsilon1_\osr)$, then $k_\varepsilon\in\ker\psi$ and
$\varepsilon1_\osr+x+k_\varepsilon=x_\varepsilon$ is stricly positive in $\osr$. Hence, $\dot{x}$ is positive.

\eqref{sp-2}$\Rightarrow$\eqref{sp-1}. 
Assume that $a\in\ost$ is strictly positive: $a\geq\delta1_\ost$ for some $\delta>0$.
Thus, $y=a-\delta1_\ost\in\ost_+$ and so $z=y+\frac{\delta}{2}1_\ost\in\ost_+$. By hypothesis, $\dot{\psi}$ is
a complete order isomorphism and so $z=\dot{\psi}(\dot{h})$ for some positive $\dot{h}\in (\osr/\ker\psi)$.
By definition of positivity in the quotient, there
is a $k\in\ker\psi$ such that $\frac{\delta}{4}+h+k\in\osr_+$. Let $q=\frac{\delta}{2}+h+k$; thus,
$q\geq\frac{\delta}{4}1_\osr$
and $\psi(q)=\dot{\psi}(\dot{q})=\frac{\delta}{2}1_\ost+z=a$.
\end{proof}

Note that for the implication \eqref{sp-1}$\Rightarrow$\eqref{sp-2} in Proposition \ref{sp} above, 
it is enough to check that strictly positive elements have positive lifts. Also, by adding and subtracting fractions of $\delta$, a 
positive lift of a strictly positive element can be replaced by a strictly positive lift.

We now apply Proposition \ref{sp} to obtain a characterisation of property $\psn$ in terms of a strictly positive matrix completion
condition: namely, in the formulation below, for every $p\in\mathbb N$, variables $x_1,\dots,x_{n-1}\in\mpee(\csta)$ 
determine a strictly positive element \eqref{sp in target space}
of $\mpee(\csta)\omin\cstar(\fnl)$ if and only if the partially specified hermitian tridiagonal matrix $\tilde T$ with superdiagonal given by $x_1,\dots,x_{n-1}$
and diagonal entries unspecified
can be completed to a strictly positive matrix $T$ in $\mpeen(\csta)$ with diagonal entries that sum to the identity of $\mpee(\csta)$.

\begin{theorem}\label{lifting to matrices} For a fixed $n\geq2$, the following statements are equivalent for a unital C$^*$-algebra $\csta$:
\begin{enumerate}
\item\label{lift-1} $\csta$ has property $\psn$;
\item\label{lift-2} for every $p\in\mathbb N$ and every $x_1,\dots,x_{n-1}\in\mpee(\csta)$ for which
\begin{equation} \label{sp in target space}
1\otimes 1 \,+\,\sum_{j=1}^{n-1} (x_j\otimes u_j) \,+\,
\sum_{j=1}^{n-1} (x_j^*\otimes u_j^*)  \tag{3.3.1}
\end{equation}
is strictly positive in $\mpee(\csta)\omin\cstar(\fnl)$, the matrix
\begin{equation}\label{pullback}
\left[ \begin{array}{ccccc} a_1 & x_1 & 0 & \cdots & 0 \\ x_1^* & a_2 & x_2 & & \vdots \\
0& x_2^* & \ddots & \ddots & 0 \\ \vdots & & \ddots & a_{n-1} & x_{n-1} \\ 
0 & \cdots & 0 & x_{n-1}^* & a_n \end{array}
\right] \tag{3.3.2}
\end{equation}
is strictly positive in $\mpeen(\csta)$ for some 
$a_1,\dots,a_{n-1}\in\mpee(\csta)$ such that
$a_1+a_2+\dots+a_n=1\in\mpee(\csta)$.
\end{enumerate}
\end{theorem}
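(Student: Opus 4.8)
The plan is to reduce property $\psn$ to a concrete lifting statement by composing Theorem~\ref{fp thm}(2) with Proposition~\ref{sp}, and then to identify that lifting statement with the matrix completion in \eqref{lift-2}. First I would record two consequences of the injectivity of $\omin$. Since $\oss_{n-1}\subseteq\cstar(\fnl)$, we have $\mpee(\csta)\omin\oss_{n-1}\coisubset\mpee(\csta)\omin\cstar(\fnl)$, so a self-adjoint element of the form $S_0\otimes 1+\sum_{j=1}^{n-1}(S_j\otimes u_j+S_j^*\otimes u_j^*)$ is strictly positive in $\mpee(\csta)\omin\oss_{n-1}$ exactly when it is so in the C$^*$-algebra $\mpee(\csta)\omin\cstar(\fnl)$. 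Likewise $\tn\subseteq\mn$ gives $\mpee(\csta)\omin\tn\coisubset\mpee(\csta)\omin\mn=\mpeen(\csta)$, so a (necessarily tridiagonal) element of $\mpee(\csta)\omin\tn$ is strictly positive there precisely when it is strictly positive as a matrix in $\mpeen(\csta)$. A direct computation from the definition of $\phi$ then shows that ${\rm id}_\csta\otimes\phi$ sends the tridiagonal matrix \eqref{pullback}, with diagonal $a_1,\dots,a_n$ and superdiagonal $x_1,\dots,x_{n-1}$, to $\tfrac1n\bigl[(a_1+\dots+a_n)\otimes 1+\sum_{j}(x_j\otimes u_j+x_j^*\otimes u_j^*)\bigr]$.

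With these identifications, Theorem~\ref{fp thm}(2) says $\csta$ has property $\psn$ if and only if the ucp surjection ${\rm id}_\csta\otimes\phi:\mpee(\csta)\omin\tn\to\mpee(\csta)\omin\oss_{n-1}$ is a complete quotient map, and Proposition~\ref{sp} rephrases the latter as: every strictly positive $y\in\mpee(\csta)\omin\oss_{n-1}$ has a strictly positive lift in $\mpee(\csta)\omin\tn$. For \eqref{lift-1}$\Rightarrow$\eqref{lift-2}, given $x_1,\dots,x_{n-1}$ for which \eqref{sp in target space} is strictly positive, I would take $y=\tfrac1n$ times \eqref{sp in target space}; it is strictly positive with constant coefficient $\tfrac1n 1$, so it lifts to a strictly positive element of $\mpee(\csta)\omin\tn$, which is automatically of the tridiagonal form \eqref{pullback}. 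Matching images forces its superdiagonal to be $x_1,\dots,x_{n-1}$ and its diagonal to sum to $1$, which is exactly \eqref{lift-2}.

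The reverse implication is where the real work lies, because \eqref{lift-2} only produces lifts when the constant coefficient is a scalar multiple of $1$, whereas the lifting criterion from Proposition~\ref{sp} must be verified for every strictly positive $y$, whose constant coefficient $S_0$ is an arbitrary positive invertible element of $\mpee(\csta)$. The bridge is conjugation by the invertible $v=S_0^{-1/2}\otimes 1$. Here $S_0$ is strictly positive because $S_0=({\rm id}\otimes\tau)(y)$ for the canonical tracial state $\tau$ on $\cstar(\fnl)$ and the slice map ${\rm id}\otimes\tau$ is positive and unital; thus $v^*yv$ is strictly positive, still lies in $\mpee(\csta)\omin\oss_{n-1}$, and has constant coefficient $1$. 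I would apply \eqref{lift-2} to $v^*yv$ to obtain a strictly positive tridiagonal lift, and then conjugate back by $S_0^{1/2}\otimes 1_n$, which acts entrywise on the coefficient algebra, commutes with ${\rm id}_\csta\otimes\phi$, and preserves both the tridiagonal pattern and strict positivity. This produces a strictly positive lift of $y$, verifying the hypothesis of Proposition~\ref{sp} and hence, via Theorem~\ref{fp thm}(2), property $\psn$.

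The step I expect to be the main obstacle is precisely this last reduction: one must check that conjugation by $S_0^{\pm1/2}\otimes 1$ stays inside the operator subsystem $\mpee(\csta)\omin\oss_{n-1}$, genuinely preserves strict positivity there, and is compatible with the complete quotient map $\phi$. Once the two injectivity identifications and this conjugation trick are secured, the remaining bookkeeping (the harmless factors of $n$ and the self-adjointness of the completions) is routine.
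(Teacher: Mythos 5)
Your proposal is correct and follows essentially the same route as the paper's proof: both directions reduce to Theorem~\ref{fp thm}(2) combined with Proposition~\ref{sp}, with coefficient matching against the linear independence of the $u_k$'s, and the converse handled by normalising the constant term via conjugation by $x_0^{-1/2}\otimes 1$ (the paper verifies strict positivity of $x_0$ with a product state $\alpha\otimes\beta$, $\beta(u_k)=0$ for $k\neq 0$, which is the same device as your slice map). The compatibility of the conjugation with ${\rm id}\otimes\phi$ that you flag as the main obstacle is handled in the paper simply by noting that the lift's entries are determined explicitly from those of the normalised element, so conjugating back entrywise visibly produces a strictly positive tridiagonal preimage of the original element.
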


\begin{proof} 
\eqref{lift-1}$\Rightarrow$\eqref{lift-2}. Because $\csta$ has property $\psn$,
Theorem \ref{fp thm} implies that
the linear map
${\rm id}_{\mpee(\csta)}\otimes\phi:\mpee( \csta)\omin\tn\rightarrow\mpee(\csta)\omin\oss_{n-1}$ is a complete quotient map.
Hence, if 
$z=1\otimes 1 \,+\,\sum_{j=1}^{n-1} (x_j\otimes u_j) \,+\,
\sum_{j=1}^{n-1} (x_j^*\otimes u_{-j})$ is strictly positive in $\csta\omin\cstar(\fnl)$, then there is a
strictly positive lift $y\in\mpee(\csta)\omin\tn$ with 
$[{\rm id}_{\mpee(\csta)}\otimes\phi](y)=z$, by Proposition \ref{sp}. The element $y$ necessarily has the form
\[
y\,=\,\sum_{j=1}h_j\otimes e_{jj}\,+\,\sum_{i=1}^{n-1} k_i\otimes e_{i,i+1} \,+\, \sum_{i=1}^{n-1} k_i^*\otimes e_{i+1,i},
\]
for some $k_1,\dots,k_{n-1}\in\mpee(\csta)$ and (strictly) positive $h_1,\dots, h_n\in\mpee(\csta)$. Let $\delta>0$ be such that
$\delta 1\leq y$. Thus, 
\[
\begin{array}{rcl}
\delta(1\otimes 1)\,\leq\,z &=&
\displaystyle\sum_{j=1}h_j\otimes\left(\frac{1}{n}1\right)\,+\,
\displaystyle\sum_{i=1}^{n-1} k_i\otimes\left(\frac{1}{n}u_i\right) \,+\, 
\displaystyle\sum_{i=1}^{n-1} k_i^*\otimes \left(\frac{1}{n}u_{i}\right) 
\\ && \\
&=&
\left(\displaystyle\sum_{j=1}\frac{1}{n}h_j\right)\otimes 1\,+\,
\displaystyle\sum_{i=1}^{n-1} \left(\frac{1}{n}k_i\right)\otimes  u_i  \,+\, 
\displaystyle\sum_{i=1}^{n-1} \left(\frac{1}{n}k_i^*\right)\otimes u_{i}\,.
\end{array}
\]
The linear independence of $u_{1-n},\dots,u_0,\dots, u_{n-1}$ implies that $z$ is the image of $y$ if and only
if $k_i=nx_i$ for $1\leq i\le n$
and $\displaystyle\sum_{j=1}^n\frac{1}{n}h_j=1$.
Now since $y$ is strictly positive, so is $\frac{1}{n}y$. Therefore, if $a_j=\frac{1}{n}h_j$ for each $j$, 
then $a_1+\cdots+a_n=1$ and
the strictly positive element $\frac{1}{n}y$ is given by the matrix  \eqref{pullback}.

\eqref{lift-2}$\Rightarrow$\eqref{lift-1}.  By Theorem \ref{fp thm} and Proposition \ref{sp}, to show that $\csta$ has property
$\psn$ it is enough to show that for every strictly positive $g\in \mpee(\csta)\omin\oss_{n-1}$ 
there exists a strictly positive $h\in\mpee(\csta)\omin\tn$ such that 
$[{\rm id}_{\mpee(\csta)}\otimes\phi](h)=g$. Assuming that $g\in \mpee(\csta)\omin\oss_{n-1}$
is strictly positive, there exist $\delta>0$ and $x_k\in\mpee(\csta)$ such that
\[
\delta(1\otimes1)\,\leq\,g\,=\, x_0\otimes 1 \,+\,\sum_{j=1}^{n-1} (x_j\otimes u_j) \,+\,
\sum_{j=1}^{n-1} (x_j^*\otimes u_{-j})\,.
\]
If $\alpha$ is an arbitrary state on $\mpee(\csta)$ and $\beta$ is a state on $\cstar(\fnl)$ such that 
$\beta(u_k)=0$ for all $k\neq0$, then $\delta\leq\alpha\otimes\beta(g)=\alpha(x_0)$. Thus, $x_0$
is strictly positive. Thus, $z=(x_0^{-1/2}\otimes1)g(x_0^{-1/2}\otimes1)$ is strictly positive
and has the form \eqref{sp in target space}; therefore, by hypothesis, $z$ has a strictly positive lift
some strictly positive $y\in\mpee(\csta)\otimes\tn$. The proof of the implication \eqref{lift-1}$\Rightarrow$\eqref{lift-2}
shows how the entries of the matrix $y$ are determined by those of the matrix $z$. 
But since $x_0^{-1/2}x_ix_0^{-1/2}$ are the tensor factors of $u_i$ in the expansion of $z$ as
a sum of elementary tensors, 
one sees immediately (using the computations
in \eqref{lift-1}$\Rightarrow$\eqref{lift-2}) that $[{\rm id}_{\mpee(\csta)}\otimes\phi](h)=g$ for some
strictly positive $h\in\mpee(\csta)\omin\tn$.
\end{proof}
 
We can now state our multivariable analogue of Ando's theorem.

 \begin{theorem}\label{multando} Let $X_1, \dots, X_{n-1} \in
   \bh$. Then $w(X_1, \ldots, X_{n-1}) < 1/2$ if and only if there
   exist $A_1, \dots, A_n \in \bh_+^{-1}$ with $A_1+ \cdots + A_n =I$
   such that
   \begin{equation}\label{matrix}
\left[ \begin{array}{ccccc} A_1 & X_1 & 0 & \cdots & 0 \\ X_1^* & A_2 & X_2 & & \vdots \\
0& X_2^* & \ddots & \ddots & 0 \\ \vdots & & \ddots & A_{n-1} & X_{n-1} \\ 
0 & \cdots & 0 & X_{n-1}^* & A_n \end{array} \right]   \tag{3.4.1}
\end{equation}
is positive and invertible in $M_n(\bh)= \bof(\hil^{(n)}).$
\end{theorem}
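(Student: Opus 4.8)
The plan is to pass from the tuple $(X_1,\dots,X_{n-1})$ to the single element $Y=\sum_{j=1}^{n-1}X_j\otimes u_j$ of the unital C$^*$-algebra $\bh\omin\cstar(\fnl)$, where $u_1,\dots,u_{n-1}$ are the universal generating unitaries, and to reduce the whole statement to the one-variable Ando theorem (Theorem \ref{ando thm}) together with the matrix-completion criterion of Theorem \ref{lifting to matrices}. The first step is to identify the free joint numerical radius with the C$^*$-algebraic numerical radius of $Y$, i.e.\ $w(X_1,\dots,X_{n-1})=w(Y)$. The inequality $w(X_1,\dots,X_{n-1})\le w(Y)$ is the easy half: any unitaries $U_1,\dots,U_{n-1}$ on a Hilbert space $\hilk$ determine, by the universal property of $\cstar(\fnl)$, a unital $*$-homomorphism $\pi:\cstar(\fnl)\to\bof(\hilk)$ with $\pi(u_j)=U_j$, and $\mathrm{id}_{\bh}\otimes\pi$ is ucp, so $w\bigl(\sum_j X_j\otimes U_j\bigr)=w\bigl((\mathrm{id}\otimes\pi)(Y)\bigr)\le w(Y)$; taking the supremum gives the claim. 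For the reverse inequality I would represent $\cstar(\fnl)$ on its universal Hilbert space $\hil_u$, so that $\bh\omin\cstar(\fnl)\subseteq\bof(\hil\otimes\hil_u)$ and $Y$ acts as $\sum_j X_j\otimes U_j^u$ for the universal unitaries $U_j^u$; since the numerical radius of a fixed element is a supremum over states, and such states both restrict from and extend to any ambient C$^*$-algebra, $w(Y)$ equals the spatial numerical radius of $\sum_j X_j\otimes U_j^u$ on $\hil\otimes\hil_u$, which is one of the tuples admitted in the definition of the free joint numerical radius, hence is $\le w(X_1,\dots,X_{n-1})$.

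Next I would apply the single-variable result to $Y$. By Theorem \ref{ando thm}, $w(Y)<\tfrac12$ if and only if $1+zY+\overline z\,Y^{*}$ is strictly positive in $\bh\omin\cstar(\fnl)$ for every $z\in\mathbb T$. Writing $Z=1\otimes1+\sum_j X_j\otimes u_j+\sum_j X_j^{*}\otimes u_j^{*}$, the gauge $*$-automorphism $\gamma_z$ of $\cstar(\fnl)$ determined by $u_j\mapsto zu_j$ satisfies $(\mathrm{id}_{\bh}\otimes\gamma_z)(Z)=1+zY+\overline z\,Y^{*}$, and since $\mathrm{id}\otimes\gamma_z$ is a $*$-automorphism it preserves strict positivity. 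Thus all the rotated elements are strictly positive simultaneously, and $w(Y)<\tfrac12$ is equivalent to the single condition that $Z$ be strictly positive in $\bh\omin\cstar(\fnl)$.

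Finally I would convert strict positivity of $Z$ into the matrix completion. Because $\bh$ has property $\psn$ \cite[Theorem 6.2]{farenick--paulsen2011}, Theorem \ref{lifting to matrices} applies with $p=1$ and $x_j=X_j$: strict positivity of $Z$ yields $A_1,\dots,A_n\in\bh_+^{-1}$ with $\sum_j A_j=I$ for which the tridiagonal matrix \eqref{matrix} is strictly positive in $M_n(\bh)=\bof(\hil^{(n)})$, that is, positive and invertible. For the converse I would use the ucp map $\mathrm{id}_{\bh}\otimes\phi:\bh\omin\tn\rightarrow\bh\omin\oss_{n-1}$: identifying the completion \eqref{matrix} with an element of $\bh\omin\tn$, its image under $\mathrm{id}\otimes\phi$ is exactly $\tfrac1n Z$, and since ucp maps send strictly positive elements to strictly positive elements, strict positivity of the completion forces $Z$ to be strictly positive (strict positivity transferring from $\bh\omin\oss_{n-1}$ to $\bh\omin\cstar(\fnl)$ because $\oss_{n-1}\coisubset\cstar(\fnl)$ under $\omin$). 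Chaining the three equivalences delivers the theorem.

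I expect the principal obstacle to be the first step: the careful identification of the free joint numerical radius, a supremum of \emph{spatial} numerical radii over all Hilbert spaces and unitary tuples, with the single intrinsic numerical radius $w(Y)$ in the universal tensor product. The $\ge$ direction in particular rests on the universal representation of $\cstar(\fnl)$ realising the supremum, together with the fact that the numerical radius of an element does not depend on the ambient C$^*$-algebra; once this equality is in hand, the remaining steps are bookkeeping built on Theorems \ref{ando thm} and \ref{lifting to matrices}.
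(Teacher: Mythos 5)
Your proof is correct and follows essentially the same route as the paper: both arguments reduce the statement to strict positivity of $Z=1\otimes 1+\sum_j(X_j\otimes u_j+X_j^*\otimes u_j^*)$ in $\bh\omin\cstar(\fnl)$, then invoke property $\psn$ for $\bh$ together with Theorem~\ref{lifting to matrices} for the forward direction and the ucp map ${\rm id}_{\bh}\otimes\phi$ for the converse. Your identification $w(X_1,\dots,X_{n-1})=w(Y)$ via the universal representation, and the gauge-automorphism reduction of the condition ``$1+zY+\overline{z}Y^*$ strictly positive for all $z\in\mathbb T$'' to the single element $Z$, supply the details behind the equivalence that the paper's proof asserts without justification, so if anything your write-up is more complete.
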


\begin{proof} Since $\bh$ is injective it
has WEP and thus, by \cite[Theorem~6.2]{farenick--paulsen2011}, $\bh$ has property $\psn$ for all
$n$. (Alternatively, \cite[Proposition~3.5]{farenick--paulsen2011}
gives a direct proof that $\bh$ has the lifting property $\pstar$,
which is easily seen to imply the lifting property $\psn$.)

We have that $w(X_1 \otimes U_1 + \cdots + X_{n-1} \otimes U_{n-1}) <
1/2$ for all unitaries if and only if $1 \otimes 1 + \sum_{j=1}^{n-1}
(X_j \otimes u_j) + \sum_{j=1}^{n-1} (X_j \otimes u_j)^*$ is strictly
positive in $\bh \omin \cstar(\fnl)$.  Thus, by applying
Theorem~\ref{lifting to matrices} we have the desired lifting.

Conversely, assume that, for a given sequence $X_1, \dots, X_{n-1}\in\bh$, there exist operators
$A_1, \dots, A_n$ satisfying the conditions above.  Tensoring the complete
quotient map $\phi: \tn \to \oss_{n-1}$ with the identity map on
$\bh$ yields a ucp map ${\rm id}_{\bh}\otimes\phi: \bh\omin\tn \to
\bh\omin\oss_{n-1}$. The image of the operator matrix \eqref{matrix} under
this map is
\[
\left(\sum_{j=1}^nA_j\right)
 \otimes 1 + \sum_{j=1}^{n-1}\left(X_j \otimes u_j \,+\,
 X_j^* \otimes u_j^*\right) = I \otimes 1 + \sum_{j=1}^{n-1}\left(X_j \otimes u_j \,+\,
 X_j^* \otimes u_j^*\right), 
 \] which will be strictly
positive.
Hence, $w(X_1 \otimes U_1 + \cdots + X_{n-1} \otimes U_{n-i}) < 1/2,$
for any set of unitaries, $U_1, \dots, U_{n-1}$.
\end{proof}

The definition of the free joint numerical radius involves a supremum,
but Theorem \ref{multando} allows us to also characterise it as an infimum.

\begin{corollary} If $X_1, \ldots, X_{n-1} \in \bh$, then
\[ w(X_1, \ldots , X_{n-1}) \,=\, \inf \{ \|A_1+ \cdots +A_n\| \}, \]
where the infimum is taken over all sets of operators $A_1, \ldots,
A_n \in \bh$ for which the operator matrix \eqref{matrix} is positive in
$\bof(\hil^{(n)}).$
\end{corollary}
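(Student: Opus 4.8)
The plan is to read the identity off from Theorem~\ref{multando} by combining it with the homogeneity of the free joint numerical radius. Write $w=w(X_1,\dots,X_{n-1})$, and let $W$ denote the infimum appearing on the right-hand side, taken over all $n$-tuples $(A_1,\dots,A_n)$ for which the matrix \eqref{matrix} is positive; note that each such $A_j$ is automatically positive, being a diagonal block of a positive operator, so that $\sum_jA_j$ is a positive operator and $\sum_jA_j\leq\|\sum_jA_j\|\,I$. Two scaling facts drive the argument. Directly from the definition one has $w(tX_1,\dots,tX_{n-1})=t\,w(X_1,\dots,X_{n-1})$ for $t\geq0$; and, because multiplying \eqref{matrix} by a positive scalar preserves positivity, the tuple $(A_1,\dots,A_n)$ solves the completion problem for $(X_1,\dots,X_{n-1})$ exactly when $(t^{-1}A_1,\dots,t^{-1}A_n)$ solves it for $(tX_1,\dots,tX_{n-1})$. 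Together these transport the threshold $1/2$ of Theorem~\ref{multando} to an arbitrary level.

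To bound $W$ from above, fix $\varepsilon>0$ and pass to the tuple obtained by dividing each $X_j$ by a suitable positive scalar, chosen so that the rescaled tuple has free joint numerical radius strictly below $1/2$. Theorem~\ref{multando} then supplies $A_1,\dots,A_n\in\bh_+^{-1}$ with $\sum_jA_j=I$ for which the tridiagonal matrix built from the rescaled operators and the $A_j$ is positive and invertible. Undoing the scaling turns this into a genuine positive completion of \eqref{matrix} for the original operators $X_j$, whose diagonal blocks sum to a scalar multiple of $I$ determined by $w$ and $\varepsilon$. Letting $\varepsilon\to0$ yields the desired upper estimate for $W$.

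For the reverse estimate I would begin with an arbitrary positive completion of \eqref{matrix} and feed it through the converse mechanism of Theorem~\ref{multando}: tensoring the complete quotient map $\phi:\tn\to\oss_{n-1}$ with ${\rm id}_{\bh}$ carries \eqref{matrix} to the positive element $\left(\sum_jA_j\right)\otimes1+\sum_{j=1}^{n-1}\bigl(X_j\otimes u_j+X_j^*\otimes u_j^*\bigr)$ of $\bh\omin\cstar(\fnl)$. Using $\sum_jA_j\leq\|\sum_jA_j\|\,I$, one may replace the diagonal term by $\|\sum_jA_j\|\,I$ without destroying positivity, and dividing through by $\|\sum_jA_j\|$ produces an inequality $1\otimes1+\sum_j\bigl(Z_j\otimes u_j+Z_j^*\otimes u_j^*\bigr)\geq0$ with each $Z_j$ a scalar multiple of $X_j$. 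By the reformulation used in the proof of Theorem~\ref{multando}, this positivity is exactly the statement that the free joint numerical radius of $(Z_1,\dots,Z_{n-1})$ does not exceed $1/2$, which, via homogeneity, is a lower bound on $\|\sum_jA_j\|$ expressed through $w$. Taking the infimum over all completions and tracking the normalising factor coming from Ando's theorem matches this with the upper bound of the previous paragraph and yields the asserted equality.

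The step I expect to be the main obstacle is the mismatch between the \emph{strict} positivity and invertibility hard-wired into Theorem~\ref{multando}, which characterises $w<1/2$ rather than $w\leq1/2$, and the infimum over merely \emph{positive} completions in the statement. This is precisely why the right-hand side is an infimum and not a minimum: an extremal completion need not exist. The gap is bridged by the $\varepsilon$-perturbation used above, together with the elementary observation that adding $\delta I$ to each diagonal block converts a positive completion into a strictly positive one while increasing $\|\sum_jA_j\|$ by at most $n\delta$, so that the strict and the non-strict completions produce the same infimum.
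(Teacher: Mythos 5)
The paper offers no proof of this corollary---it is presented as an immediate rescaling of Theorem~\ref{multando}---so your overall strategy (homogeneity of $w$, scaling the completion, an $\varepsilon$-perturbation to pass between strict and non-strict positivity, and the lower bound via ${\rm id}_{\bh}\otimes\phi$ together with $\sum_jA_j\le\|\sum_jA_j\|\,I$) is exactly the intended one. However, two concrete points need repair. First, your scaling identity is backwards: multiplying \eqref{matrix} by $t>0$ shows that $(A_1,\dots,A_n)$ completes $(X_1,\dots,X_{n-1})$ if and only if $(tA_1,\dots,tA_n)$ completes $(tX_1,\dots,tX_{n-1})$. Your version, pairing $t^{-1}A_j$ with $tX_j$, fails already in the scalar case $n=2$ with $A_1=A_2=X_1=1$, since $\left[\begin{smallmatrix}t^{-1}&t\\ t&t^{-1}\end{smallmatrix}\right]$ has negative determinant for $t>1$; taken literally it would drive the infimum to $0$.

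Second, and more seriously, the step ``tracking the normalising factor \dots yields the asserted equality'' is exactly where the argument cannot close as written. Theorem~\ref{multando} ties $w(X_1,\dots,X_{n-1})<\tfrac12$ to completions with $A_1+\cdots+A_n=I$, i.e.\ $\|\sum_jA_j\|=1$; transported by the corrected scaling, $w<s/2$ corresponds to completions of norm $s$. Both of your directions therefore yield $\inf\{\|A_1+\cdots+A_n\|\}=2\,w(X_1,\dots,X_{n-1})$, not $w(X_1,\dots,X_{n-1})$. The discrepancy is genuine: for $n=2$, $\hil=\mathbb C$, $X_1=1$ one has $w=1$, while positivity of $\left[\begin{smallmatrix}a_1&1\\ 1&a_2\end{smallmatrix}\right]$ forces $a_1a_2\ge1$, hence $a_1+a_2\ge2$, and the value $2$ is attained. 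So either the corollary as printed is missing a factor of $\tfrac12$ on the infimum (the computation says it is), or you must locate where your argument recovers that factor; as it stands the proposal asserts an equality that the argument does not deliver, and the vagueness of the final ``matching'' step conceals this.
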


When $n=2$ we recover the original version of Ando's theorem. Thus, we
see that Ando's theorem can be derived as a consequence of the fact that $\tn \to
\oss_{n-1}$ is a complete quotient map and that $\bh$ has the lifting property $\psn$.

\
\section{PROPERTY $\psn$ FOR C$^*$-ALGEBRAS}

If, in the universal representation $\csta\subset\bof(\hil_u)$ of a unital C$^*$-algebra $\csta$, there exists
a ucp map $\Phi:\bof(\hil_u)\rightarrow \csta^{**}$ such that $\Phi(a)=a$ for all $a\in A\subset A^{**}\subset\bof(\hil_u)$, then
$\csta$ is said to have the \emph{weak expectation property} (WEP). Equivalently, $\csta$ has WEP if for every
faithful representation $\pi:\csta\rightarrow\bof(\hil_\pi)$ there is a ucp $\Phi_\pi:\bof(\hil_\pi)\rightarrow \pi(\csta)^{''}$ such that $\Phi(\pi(a))=\pi(a)$ for all 
$a\in \pi(A)\subset \pi(A)^{''}\subset\bof(\hil_\pi)$.

An important feature of the C$^*$-algebras of free groups is that they may be used to detect C$^*$-algebras with WEP. This
is achieved through an important theorem of Kirchberg \cite[Proposition 1.1(iii)]{kirchberg1993}: a C$^*$-algebra has WEP if and only if 
there is a unique C$^*$-norm on the algebraic tensor product $\csta\otimes\cstar(\fni)$. We prove below (in Theorem \ref{alg psn}) that individual operator systems
$\oss_n$, assuming $n$ is at least $3$,  also detect C$^*$-algebras with WEP. 

The main results of this section can be derived as consequences of
results that appear in the thesis of the second author. See especially
Section 5 of \cite{kavruk}. We give an independent derivation of these
facts below.

We begin with two preliminary lemmas.

\begin{lemma}\label{incl lemma} $\osr\oc\oss_n\coisubset\osr\oc\cstar(\fn)$ for every $n\in\mathbb N$ and every operator system $\osr$.
\end{lemma}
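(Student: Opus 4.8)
The relation $\coisubset$ asks for the inclusion $\Theta\colon\osr\oc\oss_n\to\osr\oc\cstar(\fn)$, obtained by tensoring $\mathrm{id}_\osr$ with the embedding $\oss_n\hookrightarrow\cstar(\fn)$, to be a complete order isomorphism onto its range. That $\Theta$ is completely positive is immediate from the functoriality of the commuting tensor product, since $\oss_n\hookrightarrow\cstar(\fn)$ is ucp. The content of the lemma is the reverse implication at the level of matrix cones: if $X\in\mpee(\osr\otimes\oss_n)$ is positive when regarded in $\osr\oc\cstar(\fn)$, then already $X\in\mpee(\osr\oc\oss_n)_+$. I would argue directly from the defining description of the commuting cone, namely that $X$ is positive in $\osr\oc\oss_n$ precisely when $(\phi\cdot\psi_0)^{(p)}(X)\ge 0$ for every pair of ucp maps $\phi\colon\osr\to\bof(\hil)$ and $\psi_0\colon\oss_n\to\bof(\hil)$, into a common $\bof(\hil)$, whose ranges commute.

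Fix such a commuting pair $(\phi,\psi_0)$. Since the range of $\psi_0$ commutes with that of $\phi$, each contraction $T_j:=\psi_0(u_j)$ lies in the commutant $\vnn:=\phi(\osr)'$, which is a von~Neumann algebra because $\phi(\osr)$ is self-adjoint. The plan is to \emph{extend} $\psi_0$ to a ucp map on all of $\cstar(\fn)$ while keeping its range inside an amplification of $\vnn$, so that commutation with $\phi$ is preserved. I would do this by a one-step unitary dilation carried out inside $\vnn$: set $D_{T_j}=(1-T_j^*T_j)^{1/2}$ and $D_{T_j^*}=(1-T_jT_j^*)^{1/2}$, both in $\vnn$ by functional calculus, and on $\hil\oplus\hil$ form the unitaries
\[
V_j=\begin{pmatrix} T_j & D_{T_j^*}\\ D_{T_j} & -T_j^*\end{pmatrix}\in\mtwo(\vnn),\qquad j=1,\dots,n-1.
\]
Because the $u_j$ are free generators, no relations among the $V_j$ need hold, and the universal property of $\cstar(\fn)$ yields a unital $*$-homomorphism $\psi\colon\cstar(\fn)\to\bof(\hil\oplus\hil)$ with $\psi(u_j)=V_j$; in particular $\psi$ is ucp.

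Let $\tilde\phi\colon\osr\to\bof(\hil\oplus\hil)$ be $\tilde\phi(r)=\phi(r)\oplus\phi(r)$, again ucp. Since each $V_j$ has all of its entries in $\vnn=\phi(\osr)'$ while $\tilde\phi(r)=\phi(r)\otimes 1_{\mtwo}$, the ranges of $\tilde\phi$ and $\psi$ commute, so $(\tilde\phi,\psi)$ is an admissible commuting pair for $\osr\oc\cstar(\fn)$. The hypothesis then gives $(\tilde\phi\cdot\psi)^{(p)}(X)\ge 0$. Now compress along the isometry $W\colon\hil\hookrightarrow\hil\oplus\hil$ onto the first summand. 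For $s\in\oss_n$ one has $W^*\psi(s)W=\psi_0(s)$, since $W^*V_jW=T_j$; and because $\tilde\phi(r)$ commutes with the diagonal projection $WW^*$, a short computation gives $W^*\tilde\phi(r)\psi(s)W=\phi(r)\psi_0(s)$ for all $r\in\osr,\ s\in\oss_n$. Extending linearly over $\osr\otimes\oss_n$ and amplifying, $(1_p\otimes W)^*\bigl[(\tilde\phi\cdot\psi)^{(p)}(X)\bigr](1_p\otimes W)=(\phi\cdot\psi_0)^{(p)}(X)$. As a compression of a positive operator, the right-hand side is positive, and since the commuting pair $(\phi,\psi_0)$ was arbitrary, $X\in\mpee(\osr\oc\oss_n)_+$, which is the reverse inclusion required.

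I expect the main obstacle to be precisely the extension step: a bare ucp extension of $\psi_0$ from $\oss_n$ to $\cstar(\fn)$ is insufficient, because the extended map must continue to commute with $\phi$, and the relevant commutant $\vnn$ need not be injective, so Arveson's theorem cannot be invoked to extend within $\vnn$. The device that overcomes this is to manufacture the extension by an explicit unitary dilation whose entries are built from $T_j,T_j^*$ and their defect operators, all of which lie in $\vnn$; the dilated unitaries then automatically sit in $\mtwo(\vnn)$ and commute with the amplified $\tilde\phi$. Freeness of the generators is what makes this painless, as each contraction may be dilated independently with no compatibility relations to enforce among the $V_j$.
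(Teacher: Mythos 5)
Your proposal is correct and follows essentially the same route as the paper: reduce to extending an arbitrary commuting pair $(\phi,\psi_0)$, dilate each contraction $\psi_0(u_j)$ to a Halmos unitary whose entries lie in $\phi(\osr)'$, invoke the universal property of $\cstar(\fn)$ against the amplified map $\phi\oplus\phi$, and compress back to the first summand. (The only blemish is the off-by-one index range $j=1,\dots,n-1$ where $\oss_n$ has $n$ generators.)
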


\begin{proof} By definition of $\oc$ \cite[\S6]{kavruk--paulsen--todorov--tomforde2011}, 
it suffices to show that, for every pair of ucp maps $\phi:\osr\rightarrow\bh$ and $\psi:\oss_n\rightarrow\bh$
with communing ranges, there is a ucp extension $\tilde\psi:\cstar(\fn)\rightarrow\bh$ of $\psi$ such that $\tilde\psi$ and $\phi$ have commuting ranges.

To this end, let $\phi$ and $\psi$ be such a pair. Dilate each contraction $\psi(u_i)$ to a unitary $w_i\in \bof(\hil\oplus\hil)$:
\[
w_i\,=\,\left[ \begin{array}{cc} \psi(u_i) & \left( 1-\psi(u_i)\psi(u_{-i})\right)^{1/2} \\ 
                                               \left( 1-\psi(u_{-i})\psi(u_{i})\right)^{1/2} & -\psi(u_{-i}) \end{array} \right]
                                               \,.
\]
Let $\tilde\phi:\osr\rightarrow\bof(\hil\oplus\hil)$ be given by $\tilde\phi(r)=\phi(r)\oplus\phi(r)$. Because $\phi(r)\psi(u_i)=\psi(u_i)\phi(r)$ for all $-n\leq i\leq n$,
functional calculus yields $\tilde\phi(r)w_i=w_i\tilde\phi(r)$ for all $r\in\osr$ and $-n\leq i\leq n$.
Since $\cstar(\fn)$ is universal, there is a unital homomorphism $\pi:\cstar(\fn)\rightarrow\bof(\hil\oplus\hil)$ such that $\pi(u_i)=w_i$ for each $1\leq i\leq n$,
whence $\pi(u_i)=w_i$ for all $-n\leq i\leq n$. Let $p=\left[\begin{array}{cc} 1_\hil & 0 \\ 0&0\end{array}\right]\in \bof(\hil\oplus\hil)$ and define
$\tilde\psi:\cstar(\fn)\rightarrow\bh$ by $\tilde\phi(x)=p\pi(x)p_{\vert\hil}$. Thus, $\tilde\psi$ is a ucp extension of $\psi$. Moreover, as the range of $\tilde\phi$
commutes with $p$ and with the range of $\pi$, $\tilde\psi$ and $\phi$ have commuting ranges. 
\end{proof}

\begin{lemma}\label{ali} The following statements are equivalent for a unital C$^*$-algebra $\csta$:
\begin{enumerate}
\item\label{ali-1} $\csta\omin\cstar(\fn)=\csta\omax\cstar(\fn)$ for some $n\geq2$;
\item\label{ali-2} $\csta\omin\cstar(\fn)=\csta\omax\cstar(\fn)$ for every $n\geq2$;
\item\label{ali-3} $\csta\omin\cstar(\fni)=\csta\omax\cstar(\fni)$.
\end{enumerate}
\end{lemma}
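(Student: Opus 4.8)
The plan is to prove the three statements equivalent through the cycle \eqref{ali-1}$\Rightarrow$\eqref{ali-3}$\Rightarrow$\eqref{ali-2}$\Rightarrow$\eqref{ali-1}, the last implication being trivial, by isolating a single transfer principle for the equation $\omin=\omax$ and feeding it two inclusions of free-group C$^*$-algebras. First I would establish the following \emph{transfer principle}: if $\cstb_0\subseteq\cstb$ are unital C$^*$-algebras and there is a ucp map $E:\cstb\to\cstb_0$ restricting to the identity on $\cstb_0$, then $\csta\omin\cstb=\csta\omax\cstb$ forces $\csta\omin\cstb_0=\csta\omax\cstb_0$. The identity map $\csta\omax\cstb_0\to\csta\omin\cstb_0$ is always ucp, so only the reverse inclusion of matricial cones needs attention. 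Given $z\in\mpee(\csta\otimes\cstb_0)$ that is positive in $\mpee(\csta\omin\cstb_0)$, injectivity of the minimal tensor product gives $\csta\omin\cstb_0\coisubset\csta\omin\cstb$, so the image of $z$ is positive in $\mpee(\csta\omin\cstb)=\mpee(\csta\omax\cstb)$; applying $\mathrm{id}_\csta\otimes E$, which is ucp on the maximal tensor product by functoriality, and using $E|_{\cstb_0}=\mathrm{id}$ returns $z$, now exhibited as positive in $\mpee(\csta\omax\cstb_0)$.

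Two inclusions then drive the argument, each realising one free-group C$^*$-algebra as a ucp-complemented subalgebra of another. For the easy one, fix $n\geq2$ and let $\fn\hookrightarrow\fni$ be the inclusion on the first $n$ generators; the group homomorphism $\fni\to\fn$ fixing $u_1,\dots,u_n$ and sending every other generator to $1$ is a retraction, and it induces a unital $*$-homomorphic (hence ucp) retraction $\cstar(\fni)\to\cstar(\fn)$ onto the subalgebra $\cstar(\fn)$. Since such a retraction is available for each $n$, the transfer principle with $\cstb=\cstar(\fni)$ and $\cstb_0=\cstar(\fn)$ gives \eqref{ali-3}$\Rightarrow$\eqref{ali-2}, while \eqref{ali-2}$\Rightarrow$\eqref{ali-1} is immediate.

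The real content is \eqref{ali-1}$\Rightarrow$\eqref{ali-3}, which requires an inclusion in the opposite direction. For $n\geq2$ the free group $\fn$ (already $\ftwo$) contains a free subgroup of countably infinite rank, so there is a subgroup copy $\fni\leq\fn$; I would show that the induced inclusion of full group C$^*$-algebras $\cstar(\fni)\hookrightarrow\cstar(\fn)$ admits a ucp left inverse $E:\cstar(\fn)\to\cstar(\fni)$. The transfer principle with $\cstb=\cstar(\fn)$ and $\cstb_0=\cstar(\fni)$ then converts the single hypothesis \eqref{ali-1} into \eqref{ali-3}, closing the cycle. The main obstacle is precisely the existence of this left inverse. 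In contrast with the easy direction, there is no group retraction $\fn\to\fni$ onto an infinitely generated free subgroup, so $E$ cannot be produced as a $*$-homomorphism: centraliser considerations in the free group obstruct any such group-level retraction. Instead I would obtain $E$ as the canonical conditional expectation attached to a subgroup of a full group C$^*$-algebra, realised concretely by inducing the universal representation of $\cstar(\fni)$ on $\hilk$ up to $\fn$, on $\ell^2(\fn/\fni)\otimes\hilk$, and compressing to the subspace indexed by the trivial coset. This compression is ucp, its composition with the inclusion is the faithful representation one started from (so, identifying $\cstar(\fni)$ with its image, $E$ is a genuine left inverse), and that same fact shows $\cstar(\fni)\hookrightarrow\cstar(\fn)$ is isometric, hence a bona fide inclusion of C$^*$-algebras to which the transfer principle applies as stated.
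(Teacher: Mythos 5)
Your proof is correct and follows essentially the same route as the paper: both rest on the facts that countable free groups embed in one another and that the full group C$^*$-algebra of a subgroup is the range of a ucp conditional expectation (the paper simply cites \cite[Proposition 8.8]{Pisier-book} where you construct it via induced representations), fed into the chain $\csta\omin\cstar(\fni)\coisubset\csta\omin\cstar(\fn)=\csta\omax\cstar(\fn)\rightarrow\csta\omax\cstar(\fni)$. Your explicit isolation of the transfer principle and the use of the group retraction $\fni\to\fn$ for the easy direction are only cosmetic variations on the paper's ``the same arguments apply.''
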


\begin{proof} The free group $\fni$ is a subgroup of $\ftwo$ and, hence, of $\fn$, for any fixed $n\geq2$.
Therefore, $\csta\omin\cstar(\fni)\subset\csta\omin\cstar(\fn)$ is an inclusion of C$^*$-algebras. By \cite[Proposition 8.8]{Pisier-book} there is a
ucp projection $\psi:\cstar(\fn)\rightarrow\cstar(\fn)$ with range $\cstar(\fni)$ (considered as a unital C$^*$-subalgebra
of $\cstar(\fn)$). Thus,
\[
\csta\omin\cstar(\fni)\subset\csta\omin\cstar(\fn)=\csta\omax\cstar(\fn)\rightarrow  \csta\omax\cstar(\fni)
\]
yields a ucp map $\csta\omin\cstar(\fni)\rightarrow\csta\omax\cstar(\fni)$, which implies that
$\csta\omin\cstar(\fni)=\csta\omax\cstar(\fni)$. This proves the implication \eqref{ali-1}$\Rightarrow$\eqref{ali-3}. But using the fact that any two countable
free groups are subgroups of each other, the same arguments apply to obtain the equivalence of statements \eqref{ali-1}, \eqref{ali-2}, and \eqref{ali-3}.
\end{proof}

\begin{theorem}\label{alg psn} For a fixed $n\geq 2$, the following statements are equivalent for a unital C$^*$-algebra $\csta$.
\begin{enumerate}
\item\label{ap-1} $\csta$ has $\psn$.
\item\label{ap-2} $\csta\omin\oss_{n-1}=\csta\omax\oss_{n-1}$.
\item\label{ap-3} $\csta\omin\cstar(\fnl)=\csta\omax\cstar(\fnl)$.
\end{enumerate}
If $n=2$, then the equivalent statements \eqref{ap-1}, \eqref{ap-2}, and \eqref{ap-3} hold for every $\csta$. If $n\geq3$, then the equivalent statements \eqref{ap-1}, \eqref{ap-2}, and \eqref{ap-3}
hold if and only if $\csta$ has WEP.
\end{theorem}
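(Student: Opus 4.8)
The plan is to prove the equivalence of \eqref{ap-1}, \eqref{ap-2} and \eqref{ap-3} by closing the cycle \eqref{ap-1}$\Rightarrow$\eqref{ap-3}$\Rightarrow$\eqref{ap-2}$\Rightarrow$\eqref{ap-1}, and then to read off the $n=2$ versus $n\geq3$ dichotomy from statement \eqref{ap-3} alone. Two of the three links are short, so I would dispatch them first. For \eqref{ap-2}$\Rightarrow$\eqref{ap-1}: by Theorem \ref{fp thm}(1) the map ${\rm id}_\csta\otimes\phi\colon\csta\omax\tn\to\csta\omax\oss_{n-1}$ is always a complete quotient map, so by Proposition \ref{sp} every strictly positive matrix over $\csta\omax\oss_{n-1}$ lifts to a strictly positive matrix over $\csta\omax\tn$. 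Assuming \eqref{ap-2}, a strictly positive matrix over $\csta\omin\oss_{n-1}$ is already strictly positive in the maximal structure, lifts as above, and the image of that lift under the unital completely positive identity map $\csta\omax\tn\to\csta\omin\tn$ is a strictly positive lift in the minimal structure. Hence ${\rm id}_\csta\otimes\phi$ is a complete quotient map for $\omin$ as well, which by Theorem \ref{fp thm}(2) and Proposition \ref{sp} is precisely property $\psn$. Notably, this direction requires no nuclearity hypothesis on $\tn$.

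For \eqref{ap-3}$\Rightarrow$\eqref{ap-2} I would compare the two natural embeddings of $\csta\otimes\oss_{n-1}$ into $\csta\otimes\cstar(\fnl)$. Since $\omin$ is injective, $\csta\omin\oss_{n-1}\coisubset\csta\omin\cstar(\fnl)$; and since $\cstar(\fnl)$ is the universal C$^*$-algebra of $\oss_{n-1}$, passage to the universal C$^*$-algebra commutes with $\omax$ and yields a complete order embedding $\csta\omax\oss_{n-1}\coisubset\csta\omax\cstar(\fnl)$ (cf. \cite{kavruk--paulsen--todorov--tomforde2011,kavruk}). Hypothesis \eqref{ap-3} says the minimal and maximal cones of $\csta\otimes\cstar(\fnl)$ agree, and restricting this agreement to the subspace $\csta\otimes\oss_{n-1}$ gives exactly $\csta\omin\oss_{n-1}=\csta\omax\oss_{n-1}$; carrying out the same comparison at every matrix level yields the complete order equality \eqref{ap-2}.

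Granting the cycle, the dichotomy is immediate from \eqref{ap-3}. When $n=2$ we have $\cstar(\fnl)=\cstar(\mathbb Z)\cong C(\mathbb T)$, which is nuclear, so $\csta\omin\cstar(\fnl)=\csta\omax\cstar(\fnl)$ holds for every unital C$^*$-algebra $\csta$; hence \eqref{ap-1}, \eqref{ap-2}, \eqref{ap-3} all hold unconditionally. When $n\geq3$ we have $n-1\geq2$, so by Lemma \ref{ali} statement \eqref{ap-3} is equivalent to $\csta\omin\cstar(\fni)=\csta\omax\cstar(\fni)$, and by Kirchberg's theorem \cite{kirchberg1993} this last equality holds if and only if $\csta$ has WEP.

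The hard part is the remaining implication \eqref{ap-1}$\Rightarrow$\eqref{ap-3}: property $\psn$ must force $\csta\omin\cstar(\fnl)=\csta\omax\cstar(\fnl)$. For $n\geq3$ this is precisely the converse of the implication WEP$\Rightarrow\psn$ recorded in \cite[Theorem~6.2]{farenick--paulsen2011}, and it is the crux of the theorem that the single degree-one operator system $\oss_{n-1}$ detects WEP. The obstacle is one of propagation: property $\psn$ and Theorem \ref{fp thm} control only the elements of $\csta\otimes\cstar(\fnl)$ supported on $\oss_{n-1}$, that is, on words of length at most one in the free generators, whereas \eqref{ap-3} concerns all of $\cstar(\fnl)$, where maximal positivity is tested against arbitrary unitary representations of $\fnl$ commuting with $\csta$ and hence against products $V_{i_1}\cdots V_{i_k}$ of the generators. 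Tensoring $\tn$ with a nuclear matrix algebra cannot manufacture these higher-degree terms, and the naive truncation $\cstar(\fnl)\to\oss_{n-1}$ fails to be completely positive, so no conditional-expectation shortcut is available. My plan to cross this gap is to propagate the degree-one control supplied by $\psn$ along the word-length filtration of $\cstar(\fnl)$: Lemma \ref{incl lemma} together with the identity $\csta\oc\cstar(\fnl)=\csta\omax\cstar(\fnl)$ (valid for C$^*$-algebras) realises $\csta\oc\oss_{n-1}$ as the restriction of $\csta\omax\cstar(\fnl)$, and I would feed this into an induction realising the length-$(d{+}1)$ completions through iterated banded, rather than merely tridiagonal, matrix systems and the complete quotient maps of Theorem \ref{fp thm}, in a self-improvement paralleling Lemma \ref{ali}. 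I expect this propagation, and not any of the tensor-product bookkeeping, to carry the genuine content of the theorem.
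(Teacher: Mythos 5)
Your links \eqref{ap-3}$\Rightarrow$\eqref{ap-2} and \eqref{ap-2}$\Rightarrow$\eqref{ap-1} are sound (the first is essentially the paper's argument, resting on $\csta\omax\oss_{n-1}\coisubset\csta\omax\cstar(\fnl)$, which the paper gets from Lemma \ref{incl lemma} together with $\oc=\omax$ against a C$^*$-algebra; the second is a clean lifting argument via Proposition \ref{sp} that also works), and the $n=2$ versus $n\geq3$ dichotomy is handled exactly as in the paper. But the cycle does not close: the implication \eqref{ap-1}$\Rightarrow$\eqref{ap-3}, which you correctly identify as the crux, is only a plan, and the plan (an induction along the word-length filtration of $\cstar(\fnl)$ through iterated banded matrix systems) is not carried out and is not the tool that works. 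So as it stands there is a genuine gap.

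The propagation problem you describe --- that $\psn$ only controls words of length at most one while \eqref{ap-3} concerns all of $\cstar(\fnl)$ --- is resolved in the paper not by any filtration argument but by a multiplicative domain argument, and you should route through \eqref{ap-2} rather than aiming at \eqref{ap-3} directly. First, \eqref{ap-1}$\Leftrightarrow$\eqref{ap-2}: since $\omin=\oc$ when one factor is $\tn$ and $\oc=\omax$ when one factor is a C$^*$-algebra, one has $\csta\omin\tn=\csta\omax\tn$; comparing the two complete quotient maps ${\rm id}_\csta\otimes\phi$ out of this common domain (via \cite[Lemma 5.1]{farenick--paulsen2011}) shows that the $\omin$ version is a complete quotient map --- i.e.\ $\csta$ has $\psn$ --- exactly when $\csta\omin\oss_{n-1}=\csta\omax\oss_{n-1}$. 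Then for \eqref{ap-2}$\Rightarrow$\eqref{ap-3}: the inclusion $\iota\colon\csta\omin\oss_{n-1}\to\csta\omax\cstar(\fnl)\subset\bof(\hil)$ is ucp by \eqref{ap-2}, so by Arveson it extends to a ucp map $\tilde\iota\colon\csta\omin\cstar(\fnl)\to\bof(\hil)$; because $\tilde\iota\bigl((a\otimes u_k)^*(a\otimes u_k)\bigr)=\tilde\iota(a\otimes u_k)^*\tilde\iota(a\otimes u_k)$ (and likewise on the other side), the elements $a\otimes u_k$ lie in the multiplicative domain of $\tilde\iota$, hence so does the C$^*$-algebra they generate, namely all of $\csta\omin\cstar(\fnl)$. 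Thus $\tilde\iota$ is a unital $*$-homomorphism onto $\csta\omax\cstar(\fnl)$ and \eqref{ap-3} follows. This is precisely the ``conditional-expectation shortcut'' you ruled out: it goes by ucp extension off the degree-one subsystem rather than by truncation onto it, and it carries all of the higher-degree content for free.
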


\begin{proof} \eqref{ap-1}$\Leftrightarrow$\eqref{ap-2} The complete quotient map $\phi:\tn\rightarrow\oss_{n-1}$
induces a complete quotient map ${\rm id}_\osr\otimes\phi:\osr\omax\tn\rightarrow\osr\omax\oss_{n-1}$ for every operator
system $\osr$ \cite[Proposition 1.6]{farenick--paulsen2011}. Such is the case in particular for $\osr=\csta$. Because $\omin=\oc$ if one of the
tensor factors is $\tn$
\cite[Proposition 4.1]{farenick--paulsen2011}
and because $\oc=\omax$ if one of the tensor factors is a unital C$^*$-algebra \cite[Theorem 6.6]{kavruk--paulsen--todorov--tomforde2011},
we deduce that $\csta\omin\tn=\csta\omax\tn$.
Now consider the following commutative diagram:
\[ \begin{CD}
\csta\omin\tn   @>{ \cong}>>\csta\omax\tn  \\
@V{{\rm id}_{\csta}\otimes\phi}VV           @VV{{\rm id}_{\csta}\otimes\phi}V  \\
\csta\omin\oss_{n-1} @>>{\theta={\rm id}}> \csta\omax\oss_{n-1}\,.
\end{CD}\]
The top arrow is a complete order isomorphism, the rightmost arrow is a complete quotient map, 
while the bottom arrow is a linear isomorphism $\theta$ in which $\theta^{-1}$ is completely positive.
By \cite[Lemma 5.1]{farenick--paulsen2011}, ${\rm id}_{\csta}\otimes\phi:\csta\omin\tn \rightarrow\csta\omin\oss_{n-1}$
is a complete quotient map 
if and only if $\theta$ is a complete order isomorphism. But since ${\rm id}_{\csta}\otimes\phi:\csta\omin\tn \rightarrow\csta\omin\oss_{n-1}$
is a complete quotient map 
if and only if $\csta$ has $\psn$ (by Theorem \ref{fp thm}(2)), we deduce the equivalence \eqref{ap-1}$\Leftrightarrow$\eqref{ap-2}.

\eqref{ap-2}$\Rightarrow$\eqref{ap-3}. By Lemma \ref{incl lemma}, $\csta\oc\oss_{n-1}\coisubset\csta\oc\cstar(\fnl)$. 
Thus, $\csta\omax\oss_{n-1}\subset\csta\omax\cstar(\fnl)$ since $\oc=\omax$
if one of the tensor factors is a C$^*$-algebra. By the hypothesis that $\csta\omin\oss_{n-1}=\csta\omax\oss_{n-1}$, the 
inclusion map $\iota:\csta\omin\oss_{n-1}\rightarrow\csta\omax\cstar(\fnl)$ is ucp. Assuming that $\csta\omax\cstar(\fnl)$
is faithfully represented as a unital C$^*$-subalgebra of $\bh$, there is a ucp extension $\tilde\iota:\csta\omin\cstar(\fnl)\rightarrow\bh$
of $\iota$. For each $-(n-1)\leq k\leq (n-1)$ and every $a\in\csta$,
\[
\begin{array}{rcl}
\tilde\iota\left( (a\otimes u_k)^*(a\otimes u_k) \right) &=& \tilde\iota\left( a^*a\otimes u_k^*u_k\right) =
\tilde\iota\left(a^*a\otimes 1\right) = \iota(a^*a\otimes 1) \\ 
&=& \iota(a\otimes u_k)^*\iota(a\otimes u_k) =\tilde\iota(a\otimes u_k)^*\tilde\iota(a\otimes u_k)\,.
\end{array}
\]
Likewise, $\tilde\iota\left( (a\otimes u_k)(a\otimes u_k)^* \right)=\tilde\iota(a\otimes u_k)\tilde\iota(a\otimes u_k)^*$.
Thus, the multiplicative domain of $\tilde\iota$ contains $\{\sum_{k=-(n-1)}^{n-1}a_k\otimes u_k\,:\,a_k\in\csta\}=\csta\otimes\oss_{n-1}$, 
and therefore it also contains the C$^*$-algebra $\csta\otimes\oss_{n-1}$ generates,
namely $\csta\omin\cstar(\fnl)$. Thus, $\tilde\iota$ is a unital homomorphism and, hence, 
$\csta\omin\cstar(\fnl)=\csta\omax\cstar(\fnl)$.

\eqref{ap-3}$\Rightarrow$\eqref{ap-2}. As before, Lemma \ref{incl lemma} yields $\csta\omax\oss_{n-1}\coisubset\csta\omax\cstar(\fnl)$.  By hypothesis,
the inclusion map $\iota:\csta\omin\oss_{n-1}\rightarrow\csta\omax\cstar(\fnl)$ is ucp with range $\csta\omax\oss_{n-1}$.
Thus, $\csta\omin\oss_{n-1}=\csta\omax\oss_{n-1}$. This completes the proof of the equivalence of statements \eqref{ap-1}--\eqref{ap-3}.

To prove the additional assertions, first let $n=2$. In this case $\cstar(\fnl)=C(\mathbb T)$. Because $\csta\omin C(\mathbb T)=\csta\omax C(\mathbb T)$
for every C$^*$-algebra $\csta$, condition \eqref{ap-3} holds for every $\csta$.

Next, assume $n\geq 3$. If $\csta$ has WEP, then $\csta$ has property $\psn$ for every $n\geq2$
\cite[Theorem 6.2]{farenick--paulsen2011}, and in particular for the fixed $n$ is the statement of the theorem.
Conversely, if $\csta$ has property $\psn$, then using the equivalence of \eqref{ap-1} and \eqref{ap-2} we have
$\csta\omin\cstar(\fnl)=\csta\omax\cstar(\fnl)$. Lemma \ref{ali} yields
$\csta\omin\cstar(\fni)=\csta\omax\cstar(\fni)$. Therefore, by Kirchberg's Theorem \cite[Proposition 1.1(iii)]{kirchberg1993}, 
$\csta$ has WEP.
\end{proof}

The following fact was noted (for a fixed $n\geq3$ rather than for $n=3$ as below)
in the proof of Theorem \ref{alg psn}, but is important enough to isolate and state separately.

\begin{corollary}\label{3 is enough} The following statements are equivalent for a unital C$^*$-algebra $\csta$:
\begin{enumerate}
\item\label{3-1} $\csta$ has property $\psthree$;
\item\label{3-2} $\csta$ has WEP;
\item\label{3-3} $\csta$ has 
property $\psn$ for every $n\geq2$.
\end{enumerate}
\end{corollary}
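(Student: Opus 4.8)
The plan is to read off all three equivalences directly from Theorem \ref{alg psn}, exploiting that its concluding assertion characterises WEP via property $\psn$ for each fixed $n \geq 3$ separately.

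First I would establish \eqref{3-1}$\Leftrightarrow$\eqref{3-2} by specialising Theorem \ref{alg psn} to $n = 3$. Since $3 \geq 3$, the final clause of that theorem asserts precisely that its equivalent conditions \eqref{ap-1}--\eqref{ap-3}---in particular the statement that $\csta$ has property $\psthree$---hold if and only if $\csta$ has WEP. This delivers \eqref{3-1}$\Leftrightarrow$\eqref{3-2} with no further work.

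Next I would handle \eqref{3-2}$\Leftrightarrow$\eqref{3-3}. The implication \eqref{3-3}$\Rightarrow$\eqref{3-1} is immediate, since possessing property $\psn$ for every $n \geq 2$ includes the case $n = 3$; composing with \eqref{3-1}$\Rightarrow$\eqref{3-2} then gives \eqref{3-3}$\Rightarrow$\eqref{3-2}. For the converse \eqref{3-2}$\Rightarrow$\eqref{3-3}, suppose $\csta$ has WEP. For each fixed $n \geq 3$, Theorem \ref{alg psn} again equates WEP with property $\psn$, so $\csta$ has $\psn$ for every $n \geq 3$; for the remaining value $n = 2$, the same theorem records that property $\pstwo$ holds for every unital C$^*$-algebra. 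Hence $\csta$ has $\psn$ for all $n \geq 2$, which is \eqref{3-3}.

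There is no genuine obstacle here---the corollary is essentially a repackaging of Theorem \ref{alg psn}---but the single point requiring care is the order of quantifiers. Theorem \ref{alg psn} fixes $n$ throughout, so to extract \eqref{3-3} one must observe that the WEP characterisation holds uniformly across the separate values $n \geq 3$, and then append the unconditional case $n = 2$. Once this is noted, the three conditions close into the cycle \eqref{3-1}$\Rightarrow$\eqref{3-2}$\Rightarrow$\eqref{3-3}$\Rightarrow$\eqref{3-1}.
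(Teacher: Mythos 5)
Your proposal is correct and follows essentially the same route as the paper: the corollary is read off from Theorem \ref{alg psn} together with the trivial implication \eqref{3-3}$\Rightarrow$\eqref{3-1}. The only cosmetic difference is that the paper obtains \eqref{3-2}$\Leftrightarrow$\eqref{3-3} by citing \cite[Theorem 6.2]{farenick--paulsen2011} directly, whereas you extract it from the concluding clause of Theorem \ref{alg psn} applied to each $n\geq 3$ plus the unconditional case $n=2$; since that clause itself rests on the same citation, the content is identical.
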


\begin{proof} The equivalence of \eqref{3-2} and \eqref{3-3} is proven in
  \cite[Theorem 6.2]{farenick--paulsen2011}. Clearly \eqref{3-3} implies
 \eqref{3-1}. Theorem \ref{alg psn} shows that \eqref{3-1} implies \eqref{3-2}.
\end{proof}

\section{ANDO'S THEOREM FOR C$^*$-ALGEBRAS}

Consider the following two pairs of logical assertions concerning $x_1,\dots,x_n$ 
in  a unital C$^*$-algebra $\csta$.

\subsubsection*{Strict Form of Ando's Theorem} (Case: $n=1$)
\begin{enumerate}
\item[{\rm (A1)}] for every unitary $v\in\cstb$ in every unital C$^*$-algebra $\cstb$, the element 
$x\otimes v\in \csta\omin\cstb$ satisfies $w(x\otimes v) < \frac{1}{2}$;
\item[{\rm (A2)}] there exist strictly positive $a,b\in\csta$ such that
\[
\left[ \begin{array}{cc} a & x \\ x^* & b\end{array}\right]  
\] 
is strictly positive in $\mtwo(\csta)$ and $a+b= 1$.
\end{enumerate}

The Strict Form of Ando's Theorem is the assertion that (A1) and (A2) are logically equivalent.

\subsubsection*{Multivariable Version of the Strict Form of Ando's Theorem} (Case: $n\geq2$)
\begin{enumerate}
\item[{\rm (A1')}] for every unital C$^*$-algebra $\cstb$ and unitaries $v_1,\dots, v_n\in\cstb$, the numerical radius of
$\sum_{j=1}^nx_j\otimes v_j$ in $\csta\omin\cstb$ satisfies
\[
w(x_1\otimes v_1\,+\,\cdots\,+\,x_n\otimes v_n) \,<\, \frac{1}{2}\,;
\]
\item[{\rm (A2')}] there exist strictly positive $a_1,\dots,a_{n+1}\in\csta$ such that
\[
\left[ \begin{array}{ccccc} a_1 & x_1 & 0 & \cdots & 0 \\ x_1^* & a_2 & x_2 & & \vdots \\
0& x_2^* & \ddots & \ddots & 0 \\ \vdots & & \ddots & a_{n} & x_{n} \\ 
0 & \cdots & 0 & x_{n}^* & a_{n+1} \end{array}
\right]
\]
is a strictly positive element of $\mn(\csta)$ and $a_1+\cdots+a_{n+1}=1$.
\end{enumerate}

The Multivariable Version of the Strict Form of Ando's Theorem is the assertion that (A1') and (A2') are logically equivalent.

\begin{theorem}\label{interesting} {(\rm Ando's Theorem, Nuclearity, and WEP)}
\begin{enumerate}  
\item The Strict Form of Ando's Theorem holds for every 
C$^*$-algebra $\csta$ if and only if $C(\mathbb T)$ is a nuclear C$^*$-algebra.
\item The Multivariable Version of the Strict Form of Ando's Theorem holds for a C$^*$-algebra $\csta$ if and only if $\csta$ has WEP.
\end{enumerate}
\end{theorem}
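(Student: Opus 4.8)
The plan is to turn every condition into strict positivity of the single self-adjoint element
\[
h \,=\, 1\otimes 1 \,+\, \sum_{j=1}^{n}\bigl(x_j\otimes u_j + x_j^*\otimes u_j^*\bigr)\,\in\,\csta\otimes\oss_{n},
\]
read off once in the minimal and once in the maximal tensor product, and then to invoke Theorem~\ref{alg psn}. First I would show that (A1$'$) is equivalent to $h$ being strictly positive in $\csta\omin\oss_{n}$. Two observations accomplish this: the supremum over all unitaries of $w\bigl(\sum_j x_j\otimes v_j\bigr)$ is attained at the universal unitaries $u_1,\dots,u_n$, because any $n$ unitaries in any $\cstb$ induce a $*$-homomorphism $\cstar(\fn)\to\cstb$ carrying $u_j$ to them, and such a map cannot increase the numerical radius; and $w(y)<\tfrac12$ is equivalent to $1-e^{i\theta}y-e^{-i\theta}y^*$ being positive and invertible for every $\theta$, where the phase is absorbed by the gauge automorphism $u_j\mapsto e^{i\theta}u_j$ of $\cstar(\fn)$. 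Injectivity of $\omin$ gives $\csta\omin\oss_{n}\coisubset\csta\omin\cstar(\fn)$, so the strict positivity may be read inside $\csta\omin\oss_{n}$. The one--variable case yields (A1)$\Leftrightarrow$``$h$ strictly positive in $\csta\omin C(\mathbb T)$''.

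Next I would identify (A2$'$) with the maximal side. By Theorem~\ref{fp thm}(1) the map ${\rm id}_\csta\otimes\phi:\csta\omax{\style T}_{n+1}\to\csta\omax\oss_{n}$ is a complete quotient map, and it carries the tridiagonal matrix of (A2$'$) to a scalar multiple of $h$. Since $\omin=\oc=\omax$ when one factor is a tridiagonal system, the tridiagonal block matrices strictly positive in ${\style M}_{n+1}(\csta)$ are exactly the strictly positive elements of $\csta\omax{\style T}_{n+1}$; hence, by Proposition~\ref{sp}, a completion as in (A2$'$) with diagonal summing to $1$ exists if and only if $h$ is strictly positive in $\csta\omax\oss_{n}$. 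Combining the two reductions, the Multivariable Ando equivalence for $\csta$ asserts precisely that every such $h$ which is strictly positive in $\csta\omin\oss_{n}$ is also strictly positive in $\csta\omax\oss_{n}$; the reverse, (A2$'$)$\Rightarrow$(A1$'$), is automatic, since the identity map $\csta\omax\oss_{n}\to\csta\omin\oss_{n}$ is ucp. A general self--adjoint element of $\csta\otimes\oss_{n}$ has the form $a_0\otimes 1+\sum_j(x_j\otimes u_j+x_j^*\otimes u_j^*)$; if it is strictly positive in the minimal product then $a_0$ is strictly positive (pair with a state annihilating every $u_k$, $k\neq0$), and conjugating by $a_0^{-1/2}\otimes 1$ brings it to the canonical form of $h$. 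Thus the Ando equivalence for $\csta$ is exactly the ground--level statement that the identity $\csta\omin\oss_{n}\to\csta\omax\oss_{n}$ is positive.

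Theorem~\ref{alg psn} then supplies the conclusion, once this positivity is known at \emph{every} matrix level. For $n\ge2$ the complete order equality $\csta\omin\oss_{n}=\csta\omax\oss_{n}$ is property ${\rm (}\mathfrak S_{n+1}{\rm )}$, equivalent to WEP by Theorem~\ref{alg psn} (its index $n+1\ge3$); in the one--variable case it is property $\pstwo$, i.e. $\csta\omin C(\mathbb T)=\csta\omax C(\mathbb T)$, holding for every $\csta$. The reverse implications are immediate: WEP (resp. nuclearity of $C(\mathbb T)$) delivers the relevant property, hence the completion at all levels and in particular the scalar completion that yields (A1$'$)$\Rightarrow$(A2$'$) (resp. (A1)$\Rightarrow$(A2)).

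The step I expect to be the genuine obstacle is upgrading the scalar (ground--level) Ando equivalence to the \emph{complete} order equality that Theorem~\ref{alg psn} requires, since property ${\rm (}\mathfrak S_{m}{\rm )}$ is a matricial condition whereas (A1$'$)/(A2$'$), read literally, see only the ground level; indeed at the ground level the equivalence says merely that the minimal and maximal numerical radii agree on the length--one elements $\sum_j x_j\otimes u_j$, which controls no products and hence no higher norms. The resolution is that the $p$-th matricial cone of $\csta\otimes\oss_{n}$ is the ground--level cone of $\mpee(\csta)\otimes\oss_{n}$, so that running the Ando equivalence over the family $\{\mpee(\csta)\}_{p\in\mathbb N}$ is precisely the complete order equality for $\csta$. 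In part~(1) this is automatic from the quantifier ``for every $\csta$'': applying the hypothesis to each $\mpee(\csta)$ yields $\csta\omin C(\mathbb T)=\csta\omax C(\mathbb T)$ for all $\csta$, which is the definition of nuclearity of $C(\mathbb T)$. In part~(2) it is exactly the point at which (A1$'$)/(A2$'$) must be read with matrix entries (equivalently, one invokes the stability of WEP under $\csta\mapsto\mpee(\csta)$, valid because $\mpee$ is nuclear). Granting this, the remaining steps are routine bookkeeping with $\phi$ and the already established Theorems~\ref{lifting to matrices} and~\ref{alg psn}.
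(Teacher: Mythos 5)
Your argument is correct and is essentially the paper's own proof, which reduces (A1$'$)/(A2$'$) to strict positivity of the canonical element $1\otimes 1+\sum_j(x_j\otimes u_j+x_j^*\otimes u_j^*)$ in $\csta\omin\oss_n$ versus $\csta\omax\oss_n$ and then cites Theorems~\ref{lifting to matrices} and~\ref{alg psn}; you have simply written out the reductions (universality of $\cstar(\fn)$, the gauge trick, the quotient map $\phi$, and the renormalisation of the diagonal) that the paper's two-line proof leaves implicit. The matrix-level subtlety you flag at the end is genuine, and the paper resolves it exactly as you propose: the completion statements are to be read at every level $p$, which is built into the quantifier over $p$ in Theorem~\ref{lifting to matrices} (and, for part~(1), into the quantifier over all $\csta$).
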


\begin{proof}
An element $z$ of a unital C$^*$-algebra $B$ satisfies $w(z)<\frac{1}{2}$ if and only if $1+2\Re(z)$ is strictly positive. Because
$C(\mathbb T)$ is the universal C$^*$-algebra generated by a unitary,
Theorems \ref{alg psn} and \ref{lifting to matrices} complete the proof.
\end{proof}

Because $C(\mathbb T)$ is (by a fairly simple argument) known to be nuclear, Theorem \ref{interesting}
explains why the strict form of Ando's theorem holds for every unital
C$^*$-algebra $\csta$.

Once one knows that the strict form of Ando's theorem holds, the
remaining equivalences of Theorem~1.1 are easily verified.

\begin{remark}\label{Bunce} It is also possible to use the proof of Ando's theorem
  given by Bunce \cite{Bu} to prove the strict form of Ando's
  theorem. Bunce's proof has the added benefit of giving us a concrete
  formula for the entries $a$ and $b$ satisfying (A2). Bunce's proof
  shows that if we let $Q=(q_{i,j})_{i,j \in \mathbb N}$ be the infinite matrix with entries from
  $\csta$ given by $q_{i,i}= 1, q_{i,i+1} = x, q_{i+1,i} = x^*$ and
  $q_{i,j} =0$ for all other pairs, then one may take $a= S_0(Q)$
  where $S_0(Q)$ is the ``short'', in the language of Anderson and
  Trapp \cite{AT}, of the operator $Q$ to the first
  entry. It is not difficult to compute that in this case
\[a= S_0(Q) = 1 - x^*(Q^{-1})_{1,1}x,\]
where $(Q^{-1})_{1,1}$ denotes the $(1,1)$-entry of the matrix
defining the inverse of $Q.$ Finally, since we are assuming that $w(x)
< 1/2$ the matrix $Q$ is strictly positive and tri-diagonal so that
the entries of $Q^{-1}$ can be seen to be in the unital C*-algebra
generated by $x$ and $x^*.$ 
\end{remark}

Combining Remark \ref{Bunce} with Theorem~\ref{interesting} leads to a
rather perverse proof that $C(\mathbb T)$ is a nuclear C$^*$-algebra.

\section{C$^*$-SUBALGEBRAS OF $\bh$ WITH WEP}

Corollary \ref{3 is enough}
shows that a unital C$^*$-algebra $\csta$ has WEP if and only if $\csta$ has property $\psthree$, while 
Theorem \ref{lifting to matrices} shows that $\csta$ has $\psthree$ if and only if strictly positive elements of 
$\csta\omin\oss_2$ lift to strictly positive elements of $\csta\omin\tthree$.
Therefore, these results yield some new characterisations of WEP,
which we explore in this section.

\begin{theorem}\label{wep3x3-1} The following statements are equivalent for a unital C$^*$-subalgebra $\csta\subset\bh$:
\begin{enumerate}
\item\label{w3-1} $\csta$ has WEP;
\item\label{w3-2} whenever, for arbitrary $p\in\mathbb N$,
there exist $X_1,X_2\in\mpee(\csta)$ and $A,B,C\in\mpee(\bh)$ such that $A+B+C=I$
and 
\begin{equation}\label{3x3m}
\left[ \begin{array}{ccc} A& X_1 & 0 \\ X_1^* & B & X_2\\ 0& X_2^*& C\end{array}\right]    \tag{6.1.1}
\end{equation}
is strictly positive in $\mathcal M_{3p}(\bh)$, there also exist $\tilde A,\tilde B,\tilde C\in \mpee(\csta)$ with the same property.
\end{enumerate}
\end{theorem}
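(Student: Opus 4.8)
The plan is to reduce Theorem~\ref{wep3x3-1} to the machinery already assembled, specifically Corollary~\ref{3 is enough} (WEP $\Leftrightarrow$ property $\psthree$) together with Theorem~\ref{lifting to matrices} applied in the case $n=3$. The key observation is that property $\psthree$ is, by Theorem~\ref{lifting to matrices}, precisely the statement that strictly positive elements of the form \eqref{sp in target space} (with two off-diagonal variables $X_1,X_2$) lift from $\mpee(\csta)\omin\cstar(\ftwo)$ to strictly positive tridiagonal completions over $\mpee(\csta)$. So the theorem will follow once I match the $3\times3$ completion condition \eqref{3x3m} to this lifting condition, with the crucial point being that the ambient algebra $\bh$ also has property $\psthree$ (indeed WEP, since it is injective), so completions always exist over $\bh$.

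First I would record that, since $\csta\subset\bh$ is a unital inclusion and $\bh$ has WEP, the operator system identities $\mpee(\csta)\omin\oss_2 \coisubset \mpee(\bh)\omin\oss_2$ hold, so that strict positivity of an element of $\mpee(\csta)\omin\cstar(\ftwo)$ is detected identically whether one views it inside $\csta$ or inside $\bh$. Concretely, given $X_1,X_2\in\mpee(\csta)$, the element
\[
I\otimes 1 + X_1\otimes u_1 + X_2\otimes u_2 + X_1^*\otimes u_1^* + X_2^*\otimes u_2^*
\]
is strictly positive in $\mpee(\csta)\omin\cstar(\ftwo)$ if and only if it is strictly positive in $\mpee(\bh)\omin\cstar(\ftwo)$, because $\omin$ respects the inclusion. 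Then the $3\times3$ completion \eqref{3x3m} existing in $\mpee(\bh)$ is equivalent, via Theorem~\ref{lifting to matrices} applied to the C$^*$-algebra $\bh$, to this element being strictly positive.

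Next I would run the equivalence. For \eqref{w3-1}$\Rightarrow$\eqref{w3-2}: suppose a completion over $\bh$ exists. By the argument above, the element \eqref{sp in target space} is strictly positive over $\csta$; since $\csta$ has WEP it has $\psthree$, so Theorem~\ref{lifting to matrices} produces the desired $\tilde A,\tilde B,\tilde C\in\mpee(\csta)$ summing to $I$ with \eqref{3x3m} strictly positive over $\csta$. For the converse \eqref{w3-2}$\Rightarrow$\eqref{w3-1}: I must verify $\csta$ has $\psthree$, hence WEP by Corollary~\ref{3 is enough}. Given a strictly positive element of $\mpee(\csta)\omin\cstar(\ftwo)$ of the form \eqref{sp in target space}, its strict positivity persists over $\bh$, so by $\psthree$ for $\bh$ (which holds, as $\bh$ is injective) a strictly positive $3\times3$ completion exists over $\bh$; by hypothesis \eqref{w3-2} one then exists over $\csta$, giving the required lift and hence $\psthree$ for $\csta$.

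The main subtlety — and the step I would be most careful about — is the claim that strict positivity of the element \eqref{sp in target space} transfers between $\mpee(\csta)\omin\cstar(\ftwo)$ and $\mpee(\bh)\omin\cstar(\ftwo)$ without reference to a weak expectation. This is legitimate precisely because it is a statement about the \emph{minimal} tensor product, which is spatial and functorial under inclusions of C$^*$-algebras, so no hypothesis on $\csta$ is needed for this direction; WEP enters only through the lifting (the $\max = \min$ identity of Theorem~\ref{alg psn}). I would emphasize that this is exactly what makes the characterisation representation-independent: the completion problem \eqref{3x3m} is posed spatially in $\bh$, yet detects the intrinsic property $\psthree$ of $\csta$.
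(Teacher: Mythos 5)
Your proposal is correct and follows essentially the same route the paper takes: the paper leaves Theorem~\ref{wep3x3-1} without a formal proof, relying on the reduction stated at the start of Section~6 (WEP $\Leftrightarrow$ $\psthree$ via Corollary~\ref{3 is enough}, then $\psthree$ $\Leftrightarrow$ the lifting condition via Theorem~\ref{lifting to matrices}, with $\bh$ having $\psthree$ and injectivity of $\omin$ transferring strict positivity), which is exactly what you spell out. One tiny remark: the invocation of WEP for $\bh$ when asserting $\mpee(\csta)\omin\oss_2\coisubset\mpee(\bh)\omin\oss_2$ is unnecessary (injectivity of $\omin$ suffices), as you yourself note in your final paragraph.
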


As noted in the Introduction, one important aspect of Theorem \ref{wep3x3-1} is that this characterisation of WEP
requires a specific property of just one of the many faithful
representations that an abstract C$^*$-algebra $\csta$ can
have. Consequently, if one faithful representation has this property,
then all do.
In constrast, Lance's definition of WEP requires that every faithful representation of $\csta$ satisfy a certain property 
(namely, that there exist a weak expectation into the double
commutant) or equivalently, that a special representation, namely the
universal representation have this property. In further contrast, the
reduced atomic representation of a C$^*$-algebra always possesses a
weak expectation into the double commutant. So one faithful
representation possessing a weak expectation is not enough to
characterise WEP.

Theorem \ref{wep3x3-1} is also a new characterisation of injectivity for von~Neumann algebras.

\begin{corollary}\label{inj1} If $\csta\subset\bh$ is a von Neumann algebra, then 
$\csta$ is injective if and only if statement \eqref{w3-2} of Theorem \ref{wep3x3-1} holds for $\csta$.
\end{corollary}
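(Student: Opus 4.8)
The plan is to obtain the corollary by combining Theorem \ref{wep3x3-1} with the classical identification, valid for von~Neumann algebras, of injectivity with the weak expectation property.

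First I would note that the hypotheses of Theorem \ref{wep3x3-1} are met. A von~Neumann algebra $\csta$ acting on $\hil$ is in particular a unital C$^*$-subalgebra of $\bh$: it contains $I$ and, being weak-$^*$ closed, is norm closed. Hence Theorem \ref{wep3x3-1} applies verbatim and yields the equivalence between $\csta$ having WEP and $\csta$ satisfying the $3\times3$ completion property \eqref{w3-2}. Since Theorem \ref{wep3x3-1} is stated for C$^*$-subalgebras of $\bh$ and is representation-independent, no adjustment is needed to specialise it to the von~Neumann algebra $\csta$ in its given representation.

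Second, I would invoke the standard fact that a von~Neumann algebra has WEP if and only if it is injective. Briefly, if $\csta$ is injective there is a ucp conditional expectation $\bh\rightarrow\csta$, and since $\csta=\csta''$ this is exactly a weak expectation, giving WEP; conversely, a weak expectation into $\csta^{**}$ composed with the canonical normal projection $\csta^{**}\rightarrow\csta$ (which exists because a von~Neumann algebra is a direct summand of its bidual) produces a conditional expectation onto $\csta$, whence $\csta$ is injective. As this equivalence is itself independent of the chosen faithful representation, it dovetails with the representation-independent character of Theorem \ref{wep3x3-1}.

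Chaining the two equivalences yields injective $\iff$ WEP $\iff$ \eqref{w3-2}, which is precisely the assertion of the corollary. There is no substantive obstacle internal to this argument: the genuine content has already been supplied by Theorem \ref{wep3x3-1} and by the injectivity--WEP equivalence for von~Neumann algebras. The only point meriting care is that this latter equivalence, though classical, is nontrivial and should be cited to the literature rather than reproved here.
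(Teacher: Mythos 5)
Your proposal is correct and follows exactly the route the paper takes: the paper's entire proof of Corollary \ref{inj1} is the one-line observation that a von~Neumann algebra has WEP if and only if it is injective, combined with Theorem \ref{wep3x3-1}. Your additional sketch of the injectivity--WEP equivalence is fine but, as you note, is classical and only needs a citation.
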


\begin{proof} A von Neumann algebra has WEP if and only if it is injective.
\end{proof}

We do not know if in the results above it is sufficient to consider
only the case $p=1$.

To conclude this section, we present a second characterisation of WEP in terms of strict row contractions.
 
\begin{lemma}\label{cholesky} For any unital C$^*$-algebra $\csta$, the matrix
\[
\left[ \begin{array}{ccc} 1& x_1 &0 \\ x_1^*&1&x_2 \\ 0&x_2^*&1 \end{array}\right]
\]
is strictly positive if and only if $1-x_1^*x_1-x_2x_2^*$ is strictly positive.
\end{lemma}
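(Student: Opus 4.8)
The plan is to reduce the $3\times3$ positivity question to a single Schur complement after a harmless permutation of coordinates. Recall that an element of $\mthree(\csta)$ is strictly positive precisely when it dominates $\delta 1$ for some $\delta>0$; this is a representation-independent notion and, in particular, is preserved under conjugation by any unitary in $\mthree(\csta)$. So I would first conjugate the given matrix by the scalar permutation unitary $P\in\mthree(\mathbb C)\subset\mthree(\csta)$ that transposes the second and third coordinates (equivalently, swap rows $2,3$ and then columns $2,3$). Because $P=P^*=P^{-1}$, this conjugation preserves strict positivity, and it moves the central index to the last slot, producing
\[
M'=\begin{bmatrix} 1 & 0 & x_1 \\ 0 & 1 & x_2^* \\ x_1^* & x_2 & 1\end{bmatrix},
\]
which is strictly positive if and only if the original matrix is.

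Next I would partition $M'$ with upper-left block $A=I_2$, off-diagonal column $B=\begin{bmatrix} x_1 \\ x_2^*\end{bmatrix}$, and scalar lower-right corner $C=1$, and invoke the Schur-complement criterion. The convenient feature of this ordering is that $A$ is the identity, hence automatically strictly positive, so the criterion reduces to a single condition: $M'$ is strictly positive if and only if the Schur complement $C-B^*A^{-1}B$ is strictly positive. Computing this complement is immediate, since $A^{-1}=I_2$ and $B^*=\begin{bmatrix} x_1^* & x_2\end{bmatrix}$, giving
\[
C-B^*A^{-1}B\,=\,1-x_1^*x_1-x_2x_2^*,
\]
which is exactly the asserted condition, and the equivalence then runs in both directions at once.

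The one step that deserves care is the Schur-complement criterion itself in the strict C$^*$-algebraic setting, and I would supply it by an explicit congruence rather than by appealing to an abstract Hilbert-space argument. Writing
\[
M'=\begin{bmatrix} I_2 & 0 \\ B^* & 1\end{bmatrix}\begin{bmatrix} I_2 & 0 \\ 0 & C-B^*B\end{bmatrix}\begin{bmatrix} I_2 & B \\ 0 & 1\end{bmatrix}
\]
exhibits $M'$ as a congruence $S^*DS$ in which $S$ is upper unitriangular, hence invertible in $\mthree(\csta)$ with invertible inverse. Congruence by such an $S$ carries strictly positive elements to strictly positive elements in both directions, so $M'$ is strictly positive exactly when the block-diagonal factor $D$ is, that is, exactly when $1-x_1^*x_1-x_2x_2^*$ is strictly positive. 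I do not anticipate a genuine obstacle here: once the coordinates are permuted so that the identity block sits in the corner, the whole argument is a one-line Schur reduction, and the only things to verify are the displayed algebraic identity together with the observation that conjugation by an invertible element preserves strict positivity.
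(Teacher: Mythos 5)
Your proof is correct and is essentially the paper's argument: both reduce strict positivity of the $3\times 3$ matrix to strict positivity of the Schur complement $1-x_1^*x_1-x_2x_2^*$ via congruence by invertible unitriangular matrices, which preserves strict positivity in both directions. The only difference is organizational --- the paper eliminates the off-diagonal entries in two successive $2\times 2$ steps, whereas you permute the second and third coordinates and then perform a single block Schur reduction --- but the underlying mechanism is identical.
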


\begin{proof} Let $y$ denote the $3\times3$ matrix in question and factor $y$ as
\[
y\,=\,\left[ \begin{array}{ccc} 1& 0 &0 \\ x_1^*&1&0 \\ 0&0&1 \end{array}\right]
\,
\left[ \begin{array}{ccc} 1& 0 &0 \\ 0&1-x_1^*x_1&x_2 \\ 0&x_2^*&1 \end{array}\right]
\,\left[ \begin{array}{ccc} 1& x_1 &0 \\0&1&0 \\ 0&0&1 \end{array}\right]
\,.
\]
Thus, $y$ is strictly positive if and only if the middle factor is, which in turn is strictly positive if and only if 
$\left[ \begin{array}{cc} 1-x_1^*x_1&x_2 \\ x_2^*&1 \end{array}\right]$ is. But in $\mtwo(\csta)$ 
this matrix is unitarily equivalent to
\[
\left[ \begin{array}{cc} 1 &0 \\ x_2&1 \end{array}\right] \,
\left[ \begin{array}{cc} 1 &0 \\ 0&1-x_1^*x_1-x_2x_2^* \end{array}\right]\,
\left[ \begin{array}{cc} 1 &x_2^* \\ 0&1 \end{array}\right] \,,
\]
which is strictly positive if and only if $1-x_1^*x_1-x_2x_2^*$ is strictly positive.
\end{proof}

For $X_1,X_2\in \bh$, the condition that $I-X_1^*X_1-X_2X_2^*$ be strictly positive
is equivalent to the condition that $(X_1^*,X_2):\hil\oplus\hil\rightarrow\hil$, where $(X_1^*,X_2)(\xi\oplus\eta)=X_1^*\xi+X_2\eta$, be a strict
(row) contraction, namely
\[
\left\| (X_1^*,X_2) \right\|\,<\,1.
\]

\begin{theorem}\label{wep3x3-2} The following statements are equivalent for a unital C$^*$-subalgebra $\csta\subset\bh$:
\begin{enumerate}
\item\label{w-1} $\csta$ has WEP;
\item\label{w-2} whenever, for arbitrary $p \in \mathbb N,$ $X_1,X_2\in \mpee(\csta)$ are operators for which there exist strictly positive $A,B,C\in \mpee(\bh) $ such that $A+B+C=I$
and 
\[
\left\| \left(B^{-1/2}X_1A^{-1/2}, \, B^{-1/2}X_2C^{-1/2} \right) \right\|\,<\,1,  
\]
there also exist strictly positive $\tilde A,\tilde B,\tilde C\in \mpee(\csta)$ with the same property.
\end{enumerate}
\end{theorem}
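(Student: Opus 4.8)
The plan is to reduce Theorem~\ref{wep3x3-2} to Theorem~\ref{wep3x3-1}. For fixed $X_1,X_2$ I would show that the strict row-contraction condition in \eqref{w-2} is equivalent to the strict positivity of a $3\times3$ operator matrix of exactly the type appearing in \eqref{3x3m}. Since both statements labelled \eqref{w-2} and \eqref{w3-2} concern the same task---transferring the diagonal blocks from $\mpee(\bh)$ to $\mpee(\csta)$ while holding $X_1,X_2$ fixed---once the two hypotheses are matched, the equivalence with WEP is immediate from Theorem~\ref{wep3x3-1}.

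First I would establish a congruence identity, valid for strictly positive $A,B,C$ and arbitrary $X_1,X_2$ in any unital C$^*$-algebra (to be applied with coefficients in $\mpee(\bh)$ and in $\mpee(\csta)$): the matrix
\[
M \,=\, \left[ \begin{array}{ccc} A & X_1^* & 0 \\ X_1 & B & X_2 \\ 0 & X_2^* & C \end{array}\right]
\]
is strictly positive if and only if $\left\|\left(B^{-1/2}X_1A^{-1/2},\, B^{-1/2}X_2C^{-1/2}\right)\right\|<1$. To see this, conjugate $M$ by the invertible diagonal matrix $D=\mathrm{diag}(A^{-1/2},B^{-1/2},C^{-1/2})$; since congruence by an invertible element preserves strict positivity, $M$ is strictly positive if and only if $DMD$ is. A direct computation shows that $DMD$ has all diagonal entries equal to $1$, with $(1,2)$-entry $x_1:=A^{-1/2}X_1^*B^{-1/2}$ and $(2,3)$-entry $x_2:=B^{-1/2}X_2C^{-1/2}$, so that Lemma~\ref{cholesky} applies and yields that $DMD$ is strictly positive if and only if $1-x_1^*x_1-x_2x_2^*$ is strictly positive. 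Writing $R=\left(B^{-1/2}X_1A^{-1/2},\,B^{-1/2}X_2C^{-1/2}\right)=(x_1^*,x_2)$, one has $RR^*=x_1^*x_1+x_2x_2^*$, so $1-x_1^*x_1-x_2x_2^*$ strictly positive is precisely $\|R\|<1$, completing the identity.

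With this equivalence in hand, I would line up the two formulations by the relabelling $Y_1=X_1^*$, $Y_2=X_2$, which is a bijection of $\mpee(\csta)$ onto itself. Under it, $M$ becomes exactly the matrix \eqref{3x3m} of Theorem~\ref{wep3x3-1} (with $Y_1,Y_2$ in the superdiagonal slots), while the normalisation $A+B+C=I$ and the memberships $X_i\in\mpee(\csta)$ are unchanged. Hence statement \eqref{w-2}---that a set of $\mpee(\bh)$-valued strictly positive $A,B,C$ realising $\|R\|<1$ can be replaced by $\mpee(\csta)$-valued ones---is literally statement \eqref{w3-2} for the data $Y_1,Y_2$. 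Invoking Theorem~\ref{wep3x3-1}, both are equivalent to $\csta$ having WEP.

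The computation is elementary, so I expect no serious obstacle; the only points demanding care are the precise placement of adjoints (the congruence naturally produces $X_1^*$ in the $(1,2)$-slot, which is exactly why the cosmetic relabelling $Y_1=X_1^*$ is needed to match \eqref{3x3m}), and the observation that in the hypothesis of Theorem~\ref{wep3x3-1} the strict positivity of the full $3\times3$ matrix already forces its diagonal blocks to be strictly positive, so that $A^{-1/2},B^{-1/2},C^{-1/2}$ exist and the congruence step is legitimate in both directions.
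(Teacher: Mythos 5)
Your proposal is correct and follows essentially the same route as the paper: conjugate the $3\times3$ matrix by $\mathrm{diag}(A^{-1/2},B^{-1/2},C^{-1/2})$, apply Lemma~\ref{cholesky} to the resulting matrix with identity diagonal, identify $1-x_1^*x_1-x_2x_2^*$ with $1-RR^*$ for the row operator $R$, and absorb the adjoint by the relabelling $X_1\leftrightarrow X_1^*$ to reduce to Theorem~\ref{wep3x3-1}. Your added remark that strict positivity of the full matrix forces strict positivity of the diagonal blocks (so the congruence is legitimate in both directions) is a point the paper leaves implicit.
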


\begin{proof} 
Suppose that $X_1,X_2\in \mpee(\csta)$ and that  $A,B,C\in \mpee(\bh) $ are strictly positive
operators such that $A+B+C=I$. Let $Y$ denote the  $3\times 3$ matrix \eqref{3x3m}
and let $D=A\oplus B\oplus C$, which is strictly positive. Observe that $Y$
is strictly positive if and only if 
$D^{-1/2}YD^{-1/2}$ is strictly positive, which by Lemma \ref{cholesky} occurs precisely when 
$(B^{-1/2}X_1^*A^{-1/2}, B^{-1/2}X_2C^{-1/2})$ is a strict row contraction.
Relabeling $X_1$ by $X_1^*$
yields the result.
\end{proof} 

\begin{corollary}\label{inj2} If $\csta\subset\bh$ is a von Neumann algebra, then 
$\csta$ is injective if and only if statement (2) of Theorem \ref{wep3x3-2} holds for $\csta$.
\end{corollary}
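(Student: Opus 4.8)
The plan is to deduce this corollary directly from Theorem \ref{wep3x3-2} by invoking the classical coincidence of injectivity and the weak expectation property for von~Neumann algebras, exactly as was done in the proof of Corollary \ref{inj1}.

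First I would recall the standard fact that a von~Neumann algebra $\csta\subset\bh$ is injective if and only if it has WEP. If $\csta$ is injective, then there is a ucp projection (indeed a conditional expectation) $\Phi:\bh\rightarrow\csta$; since $\csta=\csta''$ for a von~Neumann algebra, this $\Phi$ is in particular a weak expectation of $\csta$ into its double commutant, so $\csta$ has WEP. Conversely, WEP supplies a ucp map $\bh\rightarrow\csta''=\csta$ fixing $\csta$ elementwise, which is a norm-one projection onto $\csta$, whence $\csta$ is injective.

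With this equivalence in hand, I would simply chain it with Theorem \ref{wep3x3-2}, which asserts that $\csta$ has WEP if and only if the strict row-contraction completion condition (2) holds. Composing the two biconditionals yields
\[
\csta \text{ injective}\quad\Longleftrightarrow\quad \csta \text{ has WEP}\quad\Longleftrightarrow\quad \text{statement (2) holds for } \csta,
\]
which is precisely the assertion of the corollary.

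There is essentially no obstacle to overcome here: all of the genuine content already resides in Theorem \ref{wep3x3-2}, and the von~Neumann-algebra fact relating injectivity and WEP is invoked verbatim in Corollary \ref{inj1}. Thus the present corollary is a one-line consequence of that fact together with Theorem \ref{wep3x3-2}, and I would state the proof as briefly as the proof of Corollary \ref{inj1}.
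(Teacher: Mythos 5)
Your proof is correct and matches the paper's approach exactly: the paper proves Corollary \ref{inj2} (implicitly, just as it proves Corollary \ref{inj1}) by combining Theorem \ref{wep3x3-2} with the standard fact that a von Neumann algebra has WEP if and only if it is injective. Your additional sketch of that standard fact is accurate but not needed beyond citing it.
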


Corollary \ref{inj2} shows that, in Theorem \ref{wep3x3-2}, if a unital C$^*$-subalgebra $\csta\subset\bh$
has WEP, then one should not expect to replace the original $A,B,C\in\bh$ with operators $A,B,C\in \csta''$
if the von Neumann algebra $\csta''$ is non-injective.

 \section{THE CONNES EMBEDDING PROBLEM}
 
 Perhaps the most outstanding open problem in operator algebra theory at
 present is Connes' Embedding Problem: is every II${}_1$-factor with separable predual
 a subfactor of the ultrapower $R^\omega$ of the hyperfinite II${}_1$-factor $R$?
 Kirchberg's equivalent formulation of Connes' Embedding Problem is the problem of 
 whether $\cstar(\fni)$ has WEP \cite[Theorem 13.3.1]{Brown--Ozawa-book}, \cite[Proposition 8.1]{kirchberg1993}. 
 Below we state a new equivalent
 form of the problem.
 
 \begin{theorem}\label{cep} The following statements are equivalent:
 \begin{enumerate}
 \item The Connes Embedding Problem has an affirmative solution;
 \item for every $p\in\mathbb N$ and every $x_1,x_2\in\mpee(\cstar(\ftwo))$ for which
\[
1\otimes 1 \,+\,(x_1\otimes u_1)\,+\,(x_2\otimes u_2)\,+\,(x_1^*\otimes u_1^*)\,+\,(x_2^*\otimes u_2^*)
\]
is strictly positive in $\mpee(\cstar(\ftwo))\omin\cstar(\ftwo)$, there are strictly positive $a,b,c\in\mpee(\cstar(\ftwo))$ such that
$a+b+c=1\in\mpee(\cstar(\ftwo))$ and
\[
\left[ \begin{array}{ccc} a&x_1 & 0 \\ x_1^* & b & x_2\\ 0& x_2^*& c\end{array}\right]
\]
is strictly positive in $\mathcal M_{3p}(\cstar(\ftwo))$.
 \end{enumerate}
 \end{theorem}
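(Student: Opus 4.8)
The plan is to recognise that statement (2) is simply a restatement of property $\psthree$ for the single C$^*$-algebra $\cstar(\ftwo)$, after which the theorem follows by stringing together equivalences already proved in Sections 3 and 4 together with Kirchberg's reformulation of the Connes Embedding Problem recalled above. First I would observe that statement (2) is, verbatim, condition \eqref{lift-2} of Theorem \ref{lifting to matrices} in the case $n=3$ with the algebra taken to be $\cstar(\ftwo)$: here $\cstar(\fnl)=\cstar(\ftwo)$ and the completion matrix is the $3\times3$ matrix over $\mpee(\cstar(\ftwo))$. Indeed, the hypothesis that $1\otimes1+(x_1\otimes u_1)+(x_2\otimes u_2)+(x_1^*\otimes u_1^*)+(x_2^*\otimes u_2^*)$ be strictly positive in $\mpee(\cstar(\ftwo))\omin\cstar(\ftwo)$ is precisely the strict positivity of \eqref{sp in target space}, and the required $3\times3$ completion with $a+b+c=1$ is precisely \eqref{pullback}. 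Hence Theorem \ref{lifting to matrices} shows that statement (2) holds if and only if $\cstar(\ftwo)$ has property $\psthree$, and Corollary \ref{3 is enough} upgrades this at once to the statement that $\cstar(\ftwo)$ has WEP.

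It then remains to connect the WEP of $\cstar(\ftwo)$ with the WEP of $\cstar(\fni)$, the latter being (by Kirchberg) equivalent to an affirmative answer to the Connes Embedding Problem. By Kirchberg's criterion, $\cstar(\ftwo)$ has WEP exactly when $\cstar(\ftwo)\omin\cstar(\fni)=\cstar(\ftwo)\omax\cstar(\fni)$. The flip is a complete order isomorphism for both the minimal and the maximal tensor products, so this equality may be rewritten as $\cstar(\fni)\omin\cstar(\ftwo)=\cstar(\fni)\omax\cstar(\ftwo)$, which is condition \eqref{ali-1} of Lemma \ref{ali} applied with $\csta=\cstar(\fni)$ and $n=2$. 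Lemma \ref{ali} then delivers condition \eqref{ali-3}, namely $\cstar(\fni)\omin\cstar(\fni)=\cstar(\fni)\omax\cstar(\fni)$, which by Kirchberg's criterion says exactly that $\cstar(\fni)$ has WEP. Reading these equivalences in reverse gives the converse implication, so $\cstar(\ftwo)$ has WEP if and only if $\cstar(\fni)$ does; combining this with the first paragraph and with Kirchberg's reformulation of the Connes Embedding Problem closes the loop.

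Most of this is bookkeeping: the substantive content---that the $3\times3$ completion property is equivalent to $\psthree$, and that $\psthree$ is equivalent to WEP---is already carried by Theorem \ref{lifting to matrices} and Corollary \ref{3 is enough}. The one genuinely new point requiring care is the passage between the WEP of $\cstar(\ftwo)$ and that of $\cstar(\fni)$, and I expect this to be the main (though mild) obstacle; the cleanest route is the symmetry-of-tensor-products together with the Lemma \ref{ali} manoeuvre above, which exploits the mutual embeddability of countably generated free groups already used in the proof of Lemma \ref{ali}. An alternative would be to invoke the ucp projections arising from the inclusions $\fni\leq\ftwo$ and $\ftwo\leq\fni$ (as in \cite[Proposition 8.8]{Pisier-book}) together with the stability of WEP under passage to the range of a ucp projection, but the Lemma \ref{ali} route keeps the entire argument within the framework of this paper.
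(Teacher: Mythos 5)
Your proposal is correct and follows essentially the same route as the paper's proof: identify statement (2) with property $\psthree$ for $\cstar(\ftwo)$ via Theorem \ref{lifting to matrices}, upgrade to WEP via Corollary \ref{3 is enough} (equivalently Theorem \ref{alg psn}), and then pass from the WEP of $\cstar(\ftwo)$ to that of $\cstar(\fni)$ via Kirchberg's criterion and Lemma \ref{ali}. The only difference is that you make explicit the flip (symmetry of $\omin$ and $\omax$) needed to put $\cstar(\fni)$ in the first tensor slot before invoking Lemma \ref{ali}, a step the paper leaves implicit.
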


\begin{proof} Statement (2) is equivalent to the assertion that $\cstar(\ftwo)$ has property $\psthree$, which in turn
is equivalent to the assertion that $\cstar(\ftwo)$ has WEP. But $\cstar(\ftwo)$ has WEP if and only if
$\cstar(\ftwo)\omin\cstar(\fni)=\cstar(\ftwo)\omax\cstar(\fni)$ \cite{kirchberg1993}. By Lemma \ref{ali}, 
$\cstar(\ftwo)\omin\cstar(\fni)=\cstar(\ftwo)\omax\cstar(\fni)$ if and only if 
$\cstar(\fni)\omin\cstar(\fni)=\cstar(\fni)\omax\cstar(\fni)$, which is equivalent to the assertion that
the Connes Embedding Problem has an affirmative solution \cite{kirchberg1993}. 
\end{proof}

Alternatively, if one fixes a faithful, unital representation of $\cstar(\ftwo)$
on some Hilbert space, then one may also use Theorem~\ref{wep3x3-1} or
Theorem~\ref{wep3x3-2} to give other equivalences of Connes' Embedding
Problem. We state these below.

\begin{theorem} The following statements are equivalent for a fixed faithful unital representation of $\cstar(\ftwo)$ on
some Hilbert space $\hil$:
\begin{enumerate}
\item The Connes Embedding Problem has an affirmative solution;
\item for arbitrary $p \in \mathbb N,$  whenever $X_1,X_2\in \mpee(\cstar(\ftwo))$ 
are operators for which there exist strictly positive $A,B,C\in \mpee(\bh) $ such that $A+B+C=I$
and 
\[
\left[ \begin{array}{ccc} A& X_1 & 0 \\ X_1^* & B & X_2\\ 0& X_2^*& C\end{array}\right]
\]
is strictly positive in $\mathcal M_{3p}(\bh)$, there also exist
$\tilde A,\tilde B,\tilde C\in \mpee(\cstar(\ftwo))$ with the same property;
\item  for arbitrary $p \in \mathbb N,$  whenever $X_1,X_2\in \mpee(\cstar(\ftwo))$ 
are operators for which there exist strictly positive $A,B,C\in \mpee(\bh) $ such that $A+B+C=I$
and 
\[
\left\| \left(B^{-1/2}X_1A^{-1/2}, \, B^{-1/2}X_2C^{-1/2} \right) \right\|\,<\,1,
\]
then there also exist strictly positive $\tilde A,\tilde B,\tilde C\in
\mpee(\cstar(\ftwo))$ with the same property.
\end{enumerate}
\end{theorem}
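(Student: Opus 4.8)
The plan is to observe that, once $\cstar(\ftwo)$ is realised as a unital C$^*$-subalgebra of $\bh$ through the given faithful unital representation, each of statements (2) and (3) is \emph{verbatim} an instance of a characterisation of WEP already established, so that all three statements collapse to the single condition that $\cstar(\ftwo)$ have WEP.

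First I would record the equivalence of statement (1) with the assertion that $\cstar(\ftwo)$ has WEP. This is exactly the chain of equivalences appearing in the proof of Theorem~\ref{cep}: by Kirchberg's work the Connes Embedding Problem has an affirmative solution if and only if $\cstar(\fni)\omin\cstar(\fni)=\cstar(\fni)\omax\cstar(\fni)$, and Lemma~\ref{ali} shows this holds if and only if $\cstar(\ftwo)\omin\cstar(\fni)=\cstar(\ftwo)\omax\cstar(\fni)$, which is precisely the statement that $\cstar(\ftwo)$ has WEP.

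Next, viewing $\cstar(\ftwo)\subset\bh$ via the fixed representation, statement (2) of the present theorem is identical to statement \eqref{w3-2} of Theorem~\ref{wep3x3-1} for the subalgebra $\csta=\cstar(\ftwo)$, and statement (3) is identical to statement \eqref{w-2} of Theorem~\ref{wep3x3-2} for the same subalgebra. Applying those two theorems directly yields that statement (2) holds if and only if $\cstar(\ftwo)$ has WEP, and likewise for statement (3). Chaining these equivalences with the first step gives the mutual equivalence of (1), (2), and (3).

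The one point worth emphasising---and it is conceptual rather than computational---is that Theorems~\ref{wep3x3-1} and~\ref{wep3x3-2} furnish \emph{spatial} criteria that at first glance depend on the particular embedding $\cstar(\ftwo)\subset\bh$, yet they characterise the intrinsically representation-independent property WEP. It is exactly this feature that licenses fixing an arbitrary faithful unital representation at the outset and still obtaining a genuine reformulation of the Connes Embedding Problem. Consequently there is no real obstacle here beyond the bookkeeping of matching the hypotheses of the two completion statements to those of the cited theorems; the entire content is carried by Theorem~\ref{cep}, Theorem~\ref{wep3x3-1}, and Theorem~\ref{wep3x3-2}.
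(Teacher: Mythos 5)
Your proposal is correct and is exactly the argument the paper intends: the paper offers no separate proof of this theorem, stating only that it follows by combining the equivalence of the Connes Embedding Problem with WEP for $\cstar(\ftwo)$ (established in Theorem~\ref{cep} via Kirchberg's theorem and Lemma~\ref{ali}) with the spatial characterisations of WEP in Theorems~\ref{wep3x3-1} and~\ref{wep3x3-2}, which is precisely your route. Your observation that the representation-independence of these spatial criteria is what makes fixing an arbitrary faithful representation legitimate is also the point the paper itself emphasises.
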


In the special case of $\cstar(\ftwo)$, we also do not know if
it is sufficient to check the properties above in the case $p=1$.

\section*{Acknowledgements} 
The work of the first and third authors is supported in part by NSERC and NSF, respectively.

\end{document}